\documentclass[journal]{IEEEtran}
\usepackage{cite}
\usepackage{amsthm}
\usepackage{amsmath,amssymb,amsfonts}
\usepackage{graphicx}
\usepackage{algorithm,algorithmic}
\usepackage{hyperref}
\usepackage{textcomp}
\usepackage{placeins}
\newtheorem{theorem}{Theorem}[section]
\newtheorem{assumption}[theorem]{Assumption}

\newtheorem{definition}[theorem]{Definition}
\newtheorem{lemma}[theorem]{Lemma}
\newtheorem{corollary}[theorem]{Corollary}
\newtheorem{proposition}[theorem]{Proposition}

\newtheorem{remark}[theorem]{Remark}

\def\BibTeX{{\rm B\kern-.05em{\sc i\kern-.025em b}\kern-.08em
    T\kern-.1667em\lower.7ex\hbox{E}\kern-.125emX}}

\begin{document}
\title{Localized Stabilization of Transport PDEs by Interior Flux Feedback}
\author{Constantinos Kitsos and Ian R. Manchester, 
\thanks{The authors are with the Australian Centre for Robotics and School of Aerospace, Mechanical, and Mechatronic Engineering, University of Sydney, Sydney NSW 2006, Australia (emails: \{konstantinos.kitsos, ian.manchester\}@sydney.edu.au) }}

\maketitle
\begin{abstract}
We study stabilization of multidimensional continuity equations with source terms on bounded domains by means of localized interior flux feedback. The feedback is prescribed through the divergence of the flux and is chosen so that the error with respect to a reference profile satisfies a transport equation with localized damping. The main geometric condition is a finite-time characteristic damping inequality, requiring relevant characteristics to accumulate a uniform amount of damping over a time horizon. This condition is shown to yield exponential stability in \(L^2\) of the error, under a gain condition relating localized damping to compressive amplification of the transport field. Lyapunov-type entrance conditions ensure characteristic damping on support-restricted families of trajectories. A weighted Lyapunov functional provides a differential Lyapunov criterion and an input-to-state (ISS) estimate with respect to additive perturbations. We also discuss elliptic right-inverse realizations of the feedback flux and extend the characteristic damping argument to velocity fields depending nonlinearly on the state. A two-dimensional example finally illustrates the geometric, gain, and realization conditions. 
\end{abstract}

\begin{IEEEkeywords}
control of partial differential equations, continuity equations, Lyapunov methods, localized stabilization, transport equations, input-to-state stability.
\end{IEEEkeywords}

\section{Introduction}
Localized stabilization, understood here as stabilization by feedback acting only on a prescribed subregion of the spatial domain, is a central theme in the control of distributed parameter systems. For hyperbolic partial differential equations (PDEs), such problems are intrinsically geometric as the decay or observability depends on whether propagating rays or characteristics encounter the controlled (or observed) region within a suitable time horizon. This principle underlies the classical observability and stabilization theory of wave equations, see the works of \cite{Lions1988}, the geometric control condition of \cite{Bardos1992}, and decay estimates for locally damped waves \cite{HarauxZuazua1988,Zuazua2005}.

The present paper develops a related geometric stabilization approach for continuity equations, viewed here as first-order balance laws for a transported density in the sense of the classical balance-law framework \cite{Dafermos2016}. Transport and continuity equations model the evolution of distributed densities in mass redistribution, traffic flow, crowd motion, multi-agent systems, and robotic logistics, see \cite{Maury2010,PiccoliRossi2013,Cortes2004}. In dynamic optimal transport, the continuity equation appears as a constraint in a finite-horizon optimization problem over density--flux pairs, as in the Benamou--Brenier formulation \cite{BenamouBrenier2000}, see also \cite{Villani2003,Santambrogio2015}. Recent works have also studied steering or optimizing density trajectories under input and density constraints in formulations related to optimal transport
\cite{ChenGeorgiouPavon2016,WuRantzer2024}.

A related direction has concerned stabilization and control of transport equations in the context of control of PDEs. Stability estimates for continuity equations were developed in \cite{KarafyllisKrstic2020} (one-dimensional case, \(L^p\) framework) and in \cite{ColomboMercierRosini2009} (scalar balance laws in several space dimensions, \(BV\) framework), while controllability and feedback problems for conservation laws and hyperbolic systems were considered, for example,
in \cite{BastinCoron2016,PrieurWinkinBastin2008,KarafyllisPapageorgiou2019}, see also \cite{CoronWang2012} for controllability of scalar conservation laws via velocity perturbations. Backstepping methods constitute a major approach to boundary control of transport and hyperbolic PDEs \cite{KrsticSmyshlyaev2008}, while Lyapunov-based distributed and sampled-data \(H_\infty\) control methods for transport-reaction systems were studied in \cite{FridmanBarAm2013}. Localized controllability through velocity fields of the continuity equation was studied in \cite{DuprezMoranceyRossi2019}, and localized control of nonlocal continuity equations has also been considered in \cite{PogodaevRossi2026}. The present work differs from these directions in both actuation mechanism and stability argument. In fact, we consider a multidimensional continuity equation, in which the feedback is imposed through the divergence of an interior flux, and is designed to generate localized damping in the error dynamics.

The error with respect to a reference (target) profile satisfies an equation with localized damping. The control question is how such a damping term, supported only on part of the domain, can stabilize the transported error. We address this by introducing a finite-time characteristic damping condition, which measures damping accumulated by each relevant characteristic over a finite horizon. This condition yields a finite-time contraction estimate for the closed-loop error and is imposed only on the characteristics carrying the initial error and the perturbations. The feedback is specified through the divergence of the control flux, while the realization of the feedback flux is treated separately via elliptic right-inverse constructions. This approach separates the characteristic damping argument from actuator realization.

The contribution of the paper is summarized as follows. First, we formulate a finite-time characteristic damping condition for localized stabilization of the multidimensional continuity equation and prove exponential stability in \(L^2\) under an explicit gain condition balancing localized damping against compressive amplification. Second, we provide constructive Lyapunov-type entrance conditions ensuring the characteristic damping property on support-restricted families of trajectories. These conditions are directly checkable from the velocity field, the localization function, and the support family. We also introduce a weighted Lyapunov functional, with a weight defined through a Lyapunov-type function whose sublevel sets determine the active region, and derive ISS estimates for additive perturbations. Third, we separate the characteristic damping argument from the realization of the feedback flux and discuss elliptic constructions. Finally, we extend the arguments based on characteristic damping to density-dependent velocity fields and obtain nonlinear stabilization criteria under uniform finite-time entrance conditions.

The paper is organized as follows. Section \ref{sec:model} introduces the problem setting. Section \ref{sec:geometry} develops the characteristic geometry and the geometric conditions. Section \ref{sec:stabilization} proves exponential stabilization and derives the weighted Lyapunov and ISS estimates. Section \ref{sec:flux} discusses the realization of the localized feedback flux. Section \ref{sec:nonlinear} extends the approach to density-dependent velocity fields and Section \ref{sec:example} presents an illustrative example.

\paragraph*{Notation}
We write \(\mathbb R_{\ge0}:=[0,+\infty)\). The Euclidean norm and inner product in \(\mathbb R^m\) are denoted by \(|\cdot|\) and \(x\cdot y\), respectively. We denote \(f_+:=\max\{f,0\},\) \(f_-:=\max\{-f,0\}\) for scalar functions \(f\) (hence, \(f=f_+-f_-\)). We use the standard Lebesgue, Sobolev, and trace spaces
\(L^p(\Omega)\), \(H^1(\Omega)\), \(H^1_0(\Omega)\), and
\(H^{\pm1/2}(\partial\Omega)\), all endowed with their usual norms. We denote by \(C_c^\infty(A)\) the space of smooth functions compactly supported in \(A\) and for vector fields, \(W^{1,\infty}(\Omega;\mathbb R^m)\) denotes the space of essentially bounded vector fields with essentially bounded weak derivatives. For \(f\in L^p(\Omega)\), \(\operatorname{ess\,supp}f\) denotes the essential support of \(f\), namely, the complement in \(\Omega\) of the largest open set on which \(f=0\) a.e. We write \(A\Subset B\) when  \(\overline A\) is compactly contained in \(B\).

\section{Problem Setting}
\label{sec:model}
In this section, we present the model, the main hypotheses, and the stabilization problem we aim to solve.

Let \(\Omega\subset\mathbb R^m\); \(m\ge 1\), be a bounded domain with \(C^2\) boundary \(\partial\Omega\). We consider a mobile density
\[
\rho=\rho(t,x),\qquad (t,x)\in\mathbb R_{\ge 0}\times\Omega,
\]
governed by the continuity equation
\begin{align}
\partial_t\rho+\nabla\cdot(\rho u+J_d)=w \qquad \text{in } \mathbb R_{\ge 0}\times\Omega,
\label{eq:rho_balance}
\end{align}
 where \(u=u(t,x)\) is a prescribed velocity field, \(J_d=J_d(t,x)\) is an interior control flux, and \(w=w(t,x)\) is a distributed perturbation.

The objective is to stabilize \(\rho\) towards a prescribed reference profile \(\rho^\star=\rho^\star(x),\) supported for example in a target region \(\Sigma_d\Subset\Omega\) (drop region in logistics) by appropriate design of the interior control flux \(J_d\).

The error 
\begin{align}
e:=\rho-\rho^\star.
\label{eq:error_def}
\end{align} 
satisfies a transport equation with localized damping
\begin{align}
\partial_t e+\nabla\cdot(ue)+\nabla\cdot J_d = w-r^\star,
\label{eq:error_before_feedback}
\end{align}
where \begin{align}
r^\star:=\nabla\cdot(\rho^\star u)\label{eq:rho_star}\end{align} is the transport residual of the reference profile.
We impose the feedback law through the divergence constraint
\begin{align}
\nabla\cdot J_d= \kappa\sigma e-r^\star,
\label{eq:div_law}
\end{align}
 where \(\kappa>0\) is the feedback gain and \(\sigma=\sigma(t,x)\) is a nonnegative localization function determining where the feedback acts and it is positive on an active region \(V_d(t) \subset \Omega\), to be specified later. This formulation may be understood as feedback through an actuator flux, where \(\nabla\cdot J_d\) is the quantity entering the balance law, while the realization of a corresponding flux \(J_d\) is addressed in Section \ref{sec:flux}.

Substituting \eqref{eq:div_law} into \eqref{eq:error_before_feedback}, the closed-loop error equation becomes
\begin{align}
\partial_t e+\nabla\cdot(ue) = -\kappa\sigma e+w \qquad \text{in } \mathbb R_{\ge 0}\times\Omega.
\label{eq:error_closed_loop}
\end{align}

The control divergence law \eqref{eq:div_law} is chosen so that the error equation contains \(-\kappa\sigma e\), which stands for the damping term. The term \(-r^\star\) is a feedforward correction compensating for the fact that the prescribed reference profile need not be transported by \(u\) but is allowed to be a smooth compactly supported profile inside the target region \(\Sigma_d\). Note here that the perturbation \(w\) may also absorb errors in the realization of the flux divergence.

\begin{remark} \label{rem:mass_conservation}
The feedback law we chose above is not mass preserving in general. Indeed, when \(w=0\) and \(\rho^\star\) is compactly supported in the interior, integrating the closed-loop error equation gives \(\frac{d}{dt}\int_\Omega \rho(t,x)dx=-\kappa\int_\Omega \sigma(x)e(t,x) dx.\) Thus, the actuator can be understood as being able to remove or inject density through the prescribed divergence, or possibly as exchanging mass with an external compartment not represented in the PDE. This is appropriate for problems where the objective is convergence to a reference profile beyond typical redistribution of a conserved total mass. In robotic logistics, \(\rho\) may represent agents or material and the feedback divergence can be interpreted as localized activation/deactivation, loading/unloading, or exchange with a depot/buffer. 
If we require strict conservation of mass, we may alternatively consider a model with coupled transport equations involving transported material, unloaded agents, and loaded agents. In such systems, conservation can be imposed through the reaction couplings corresponding to pickup and drop-off mechanisms that preserve the total amount of material and the total number of agents. Extending the present framework to such coupled transport systems, inspired by Lyapunov methods as in \cite{KitsosBesanconPrieur2022}, is left for future work.
\end{remark}

\begin{remark}
Given a reference profile \(\rho^\star\in C_c^\infty(\Sigma_d)\) and velocity field \(u(t,\cdot)\in W^{1,\infty}(\Omega;\mathbb R^m)\), the feedforward residual satisfies \(\rho^\star u(t,\cdot)\in W^{1,\infty}(\Omega;\mathbb R^m)\) and \(\operatorname{supp}(\rho^\star u(t,\cdot))\subset \operatorname{supp}\rho^\star\Subset\Sigma_d.\) Therefore, \(\operatorname{supp} r^\star(t,\cdot)\subset\operatorname{supp}\rho^\star\Subset\Sigma_d\)
in the sense of distributions. Thus, if \(\Sigma_d\subset V_d(t)\) for all \(t\ge0\), the feedforward residual is localized in the active region.
\end{remark}

\begin{remark}
The stabilization analysis is carried out at the level of the  error equation \eqref{eq:error_closed_loop} and the particular vector field \(J_d\) constructing \eqref{eq:div_law} is not unique. The solvability and interpretation of the flux realization are discussed separately in Section~\ref{sec:flux}.
\end{remark}

We assume the following (minimal) regularity on the reference profile and the prescribed velocity field.

\begin{assumption}
\label{ass:domain_u}
The domain \(\Omega\subset\mathbb R^m\) is bounded with \(C^2\) boundary. The velocity field satisfies
\[
u\in L^\infty_{\mathrm{loc}}\bigl(\mathbb R_{\ge 0};W^{1,\infty}(\Omega;\mathbb R^m)\bigr), \qquad
\nabla\cdot u\in L^\infty(\mathbb R_{\ge 0}\times\Omega),
\]
and the boundary condition (impermeability)
\begin{align}
u(t,\cdot)\cdot n=0
\qquad
\text{on }\partial\Omega
\label{eq:impermeability}
\end{align}
holds in the trace sense for a.e. \(t\ge0\).

The reference profile satisfies
\[
\rho^\star\in L^2(\Omega),\quad
r^\star(t,\cdot):=\nabla\cdot(\rho^\star u(t,\cdot))
\in L^2_{\mathrm{loc}}(\mathbb R_{\ge 0};L^2(\Omega)).
\]
The initial error and perturbation satisfy
\[
e_0:=\rho_0-\rho^\star\in L^2(\Omega),\qquad
w\in L^2_{\mathrm{loc}}(\mathbb R_{\ge 0};L^2(\Omega)).
\]
Finally,
\[
\sigma\in L^\infty(\mathbb R_{\ge 0}\times\Omega),\quad
\sigma(t,x)\ge0,
\quad\text{for a.e. }(t,x)\in\mathbb R_{\ge 0}\times\Omega.
\]
\end{assumption}

\begin{remark}
A simple sufficient condition to meet the assumed regularity of \(r^\star\) is
\(\rho^\star\in H^1(\Omega),\) \(u\in L^\infty_{\mathrm{loc}}\bigl(\mathbb R_{\ge 0};W^{1,\infty}(\Omega;\mathbb R^m)\bigr).\) In the example later, \(\rho^\star\in C_c^\infty(\Sigma_d)\), so the condition is automatic under the assumed regularity of \(u\). Condition \eqref{eq:impermeability} is a boundary condition ensuring that the characteristic flow remains inside \(\Omega\), so no boundary condition is required for the error equation \eqref{eq:error_closed_loop}.
\end{remark}

Let \(X(s;t,x)\) denote the characteristic flow associated with the velocity field \(u\), namely,
\begin{align}
\frac{d}{ds}X(s;t,x)=u(s,X(s;t,x)),
\qquad
X(t;t,x)=x.
\label{eq:char_flow}
\end{align}

Under Assumption \ref{ass:domain_u}, the flow is understood in the Carath\'eodory sense and the Lipschitz regularity in \(x\) yields a unique Carath\'eodory flow. The flow extends to a Lipschitz homeomorphism of \(\overline\Omega\) onto itself, with Lipschitz inverse \(X(t;s,\cdot)\) (its restriction maps \(\Omega\) onto \(\Omega\)). It is differentiable a.e. in \(x\), and the Jacobian determinant 
\[
J(s;t,x):=\det D_x X(s;t,x)
\]
of the flow is defined for a.e. \(x\in\Omega\) and satisfies Liouville's formula
\begin{align}
J(s;t,x) = \exp\left( \int_t^s\nabla\cdot u(\tau,X(\tau;t,x)) d\tau \right)
\label{eq:liouville_formula}
\end{align}
for a.e. \(x\in\Omega\). Consequently, for every \(\psi\in L^1(\Omega)\),
\(\int_\Omega \psi(y) dy=\int_\Omega \psi(X(s;t,x))J(s;t,x)dx.\)

For smooth data, let \(0\le s_0\le s_1\) and consider the characteristic
\(s\mapsto X(s;s_0,x)\). Along this characteristic, the closed-loop error
equation \eqref{eq:error_closed_loop} gives
\begin{align}
&\frac{d}{ds}e(s,X(s;s_0,x))
\notag\\&\quad =
-\bigl(\nabla\cdot u+\kappa\sigma\bigr)(s,X(s;s_0,x)) e(s,X(s;s_0,x)) \notag\\
&\quad+ w(s,X(s;s_0,x)).
\label{eq:char_error_ode}
\end{align}
It follows that
\begin{align}
&e(s_1,X(s_1;s_0,x)) \notag\\
&= E(s_1,s_0;x)e(s_0,x) \notag\\
&\quad+ \int_{s_0}^{s_1} e^{-\int_{s}^{s_1} (\nabla\cdot u+\kappa\sigma)(\tau,X(\tau;s_0,x)) d\tau}
w(s,X(s;s_0,x)) d s,
\label{eq:char_error_formula}
\end{align}
where
\begin{align}
E(s_1,s_0;x):= e^{-\int_{s_0}^{s_1}
(\nabla\cdot u+\kappa\sigma) (\tau,X(\tau;s_0,x)) d\tau}.
\label{eq:E_defintion}
\end{align}

We next discuss the well-posedness property of the closed-loop error equation \eqref{eq:error_closed_loop}, which is standard for linear transport equations with sufficiently regular velocity fields and follows from the method of characteristics, see, for example, \cite[Ch. 3]{Evans2010} or \cite[Ch. 8]{LeVeque2002}.

\begin{proposition}
\label{prop:wellposedness}
Suppose Assumption \ref{ass:domain_u} holds. Then, for every \(T>0\),
\(e_0\in L^2(\Omega)\), \(
w\in L^2(0,T;L^2(\Omega)),\)
the closed-loop equation \eqref{eq:error_closed_loop} with initial data \(e_0(x):=e(0,x)\), admits a unique mild solution \(e\in C([0,T];L^2(\Omega)).\)
Moreover, the solution depends continuously on \((e_0,w)\).
\end{proposition}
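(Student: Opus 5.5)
We will \emph{define} the mild solution through the characteristic representation \eqref{eq:char_error_formula}, rewritten in Eulerian form by composing with the inverse flow. For \(t\in[0,T]\) and a.e. \(x\in\Omega\), set
\begin{align}
e(t,x):=&\,E(t,0;X(0;t,x))\,e_0(X(0;t,x)) \notag\\
&+\int_0^t E(t,s;X(s;t,x))\,w(s,X(s;t,x))\,ds,
\label{eq:mild_def_plan}
\end{align}
where \(E(t,s;X(s;t,x))=\exp\bigl(-\int_s^t(\nabla\cdot u+\kappa\sigma)(\tau,X(\tau;t,x))d\tau\bigr)\). This is the formula \eqref{eq:char_error_formula} evaluated at the foot point \(X(s;t,x)\), and the flow property \(X(\tau;s,X(s;t,x))=X(\tau;t,x)\) shows that the two expressions agree.

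\textbf{Existence and bound.} Let \(M:=\|\nabla\cdot u\|_{L^\infty}\). Since \(\sigma\ge0\), the damping only helps, and \(0<E\le e^{M(t-s)}\). To estimate the transported term in \(L^2\), I change variables \(y=X(0;t,x)\), i.e. \(x=X(t;0,y)\). This gives
\[
\int_\Omega |e_0(X(0;t,x))|^2dx=\int_\Omega|e_0(y)|^2J(t;0,y)\,dy\le e^{Mt}\|e_0\|^2_{L^2},
\]
by Liouville's formula \eqref{eq:liouville_formula}. The Duhamel term is handled in the same way for each fixed \(s\), followed by Minkowski's inequality in \(s\). The result is
\[
\|e(t)\|_{L^2}\le e^{\frac32Mt}\|e_0\|_{L^2}+e^{\frac32Mt}\sqrt{t}\,\|w\|_{L^2(0,t;L^2)}.
\]
The map \((e_0,w)\mapsto e\) is linear, so this bound already gives continuous dependence, provided uniqueness holds.

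\textbf{Continuity in time.} Here I argue by density. For \(e_0\in C_c(\Omega)\) and \(w\) continuous and compactly supported, the flow \((s,t,x)\mapsto X(s;t,x)\) is continuous, being Carath\'eodory with a Lipschitz dependence on \(x\). The exponent in \(E\) is continuous in \((t,s)\) in \(L^1\)-averaged sense. I therefore expect \(t\mapsto e(t)\in L^2\) to be continuous for such data, by dominated convergence. For general data, the uniform-in-\([0,T]\) bound above makes \(e\) a uniform limit of \(C([0,T];L^2)\) functions, hence \(e\in C([0,T];L^2)\).

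\textbf{Consistency and uniqueness; main obstacle.} To show \eqref{eq:mild_def_plan} is a weak solution, I test against \(\varphi\in C_c^\infty([0,T)\times\Omega)\) and change variables along the flow. Then \(\int e\,\partial_t\varphi+e\,u\cdot\nabla\varphi-\kappa\sigma e\varphi\) reduces to derivatives along characteristics, which are absolutely continuous. No boundary term appears, because the flow preserves \(\Omega\) by \eqref{eq:impermeability}.

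The main obstacle is uniqueness of weak solutions, since \(\sigma\) and \(\nabla\cdot u\) are only \(L^\infty\) and \(e\) is only \(L^2\). I would first try a duality argument. Given a weak solution of the homogeneous problem, solve the backward adjoint equation
\[
\partial_t\psi+u\cdot\nabla\psi-\kappa\sigma\psi=\chi
\]
by characteristics, with \(\chi\) smooth. Then pass to the limit using mollified coefficients and use \(\psi\) as a test function to conclude \(\int_0^T\!\!\int e\chi=0\). If the limiting procedure is delicate, I would instead invoke DiPerna--Lions renormalization. Since \(u\in W^{1,\infty}\), the commutator lemma applies, so \(|e|^2\) satisfies the renormalized equation. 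A Gr\"onwall estimate \(\frac{d}{dt}\|e\|^2\le M\|e\|^2\) with zero data then forces \(e\equiv0\).
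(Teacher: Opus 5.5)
Your proposal is correct and follows essentially the paper's route. Both arguments represent the solution through the characteristic (Duhamel) evolution family, bound it in \(L^2\) via the Liouville change of variables, and pass to general data by density. The paper combines \(E^2J=\exp\bigl(-\int_s^t(\nabla\cdot u+2\kappa\sigma)\,d\tau\bigr)\) to get the sharper factor \(e^{\int_s^t\|(\nabla\cdot u(\tau,\cdot))_-\|_{L^\infty}d\tau}\), whereas your separate bounds on \(E\) and \(J\) give the cruder but sufficient factor \(e^{3M(t-s)}\) for the squared norm.

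Your extra step proving uniqueness among weak solutions, by duality or DiPerna--Lions renormalization, goes beyond the paper. The paper treats uniqueness of the mild solution as immediate from the Duhamel representation and the linear estimate. If you keep that step, integrating the renormalized equation over all of \(\Omega\) needs a short boundary-layer cutoff argument using \(u\cdot n=0\) and the Lipschitz regularity of \(u\).
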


\begin{proof}
For smooth data, the characteristic formula \eqref{eq:char_error_formula} gives the solution. Since, by Assumption \ref{ass:domain_u}, \(u\in L^\infty(0,T;W^{1,\infty}(\Omega;\mathbb R^m))\) and \(\nabla\cdot u\in L^\infty((0,T)\times\Omega)\), the associated flow is Lipschitz with Lipschitz inverse in the spatial variable for each pair of times. Hence, the homogeneous evolution family is bounded on \(L^2(\Omega)\) on \([0,T]\). To see this, for \(0\le s\le t\le T\), define \(U(t,s)\phi\) by
\[
(U(t,s)\phi)(X(t;s,x))=\phi(x)E(t,s;x),
\]
where \(E(t,s;x)\) is defined by \eqref{eq:E_defintion} with \(s_0=s\) and \(s_1=t\). By Liouville's formula,
\[\begin{aligned}
&\|U(t,s)\phi\|_{L^2(\Omega)}^2\\&=\int_\Omega |\phi(x)|^2 e^{-\int_s^t\nabla\cdot u(\tau,X(\tau;s,x)) d\tau-2\kappa\int_s^t\sigma(\tau,X(\tau;s,x))d\tau}dx \\  &\le e^{ \int_s^t \|(\nabla\cdot u(\tau,\cdot))_-\|_{L^\infty(\Omega)} d\tau}\|\phi\|_{L^2(\Omega)}^2.
\end{aligned}
\]
Thus, \(U(t,s)\) is bounded on \(L^2(\Omega)\) on every finite time interval and Duhamel's formula \(e(t)=U(t,0)e_0+\int_0^tU(t,s)w(s) ds\) then gives a bounded input-to-state map from \(L^2(0,T;L^2(\Omega))\) into \(C([0,T];L^2(\Omega))\). Hence, 
\(e\in C([0,T];L^2(\Omega))\) and uniqueness and continuous dependence follow from the estimate. For general data in \(L^2(\Omega)\), we invoke density arguments.
\end{proof}

We summarize below the main problem we wish to solve.
\paragraph*{Control objective}
Our objective is to design the localized divergence feedback \eqref{eq:div_law} and to provide geometric conditions for velocity field \(u\), localization function \(\sigma\), and active region \(V_d\), along with feedback gains \(\kappa\), such that the closed-loop error equation \eqref{eq:error_closed_loop} satisfies the ISS estimate
\begin{align*}
\|e(t)\|_{L^2(\Omega)}^2 &\le C e^{-\alpha t}\|e_0\|_{L^2(\Omega)}^2 \notag\\ &\quad+
C\int_0^t e^{-\alpha(t-s)}\|w(s)\|_{L^2(\Omega)}^2\,ds,
\end{align*}
for some \(C,\alpha>0\) (implying exponential stabilization for the unperturbed case \(w \equiv 0\)). The stability problem is treated separately from the realization of the flux \(J_d\), and the same geometric condition is extended to density-dependent velocity fields \(u[\rho]\).

\begin{remark}
A central difficulty addressed here, complementarily to \cite{DuprezMoranceyRossi2019,PogodaevRossi2026,
KarafyllisKrstic2020}, is the following: given a damping term \(-\kappa\sigma e\) supported only on \(V_d\Subset\Omega\), under what geometric conditions on the characteristics, and under what explicit gain condition on \(\kappa\), does exponential decay in \(L^2\) hold uniformly for all initial errors supported on the relevant support family? Answering this, as in the sequel, involves quantifying the damping accumulated by each relevant characteristic over a finite horizon and balancing it against compressive amplification of the transport field. This constitutes a geometric difficulty that is the core of the present analysis.
\end{remark}

\section{Characteristic Geometry and Finite-Time Damping}\label{sec:geometry}

In this section, we introduce the geometric conditions that render localized feedback effective. Since the feedback acts through the localization function \(\sigma\), stabilization requires the relevant characteristics to encounter the region where \(\sigma\) is positive. This requirement is first expressed through a finite-time characteristic damping condition and then verified by a Lyapunov-type inequality.

\subsection{Finite-Time Damping Along Characteristics}

For \(T>0\), let us define the accumulated damping along the characteristic starting
from \(x\) at time \(t\) by
\begin{align}
\mathcal D_T(t,x) := \int_t^{t+T}\sigma(s,X(s;t,x)) ds,
\label{eq:char_damping_int}
\end{align}
for \(t\ge0,\ x\in\Omega.\)

\begin{definition}
\label{def:characteristic_damp}
Let \(\mathcal A(t)\subset\Omega\), \(t\ge0\), be a family of measurable sets.
We say that the finite-time characteristic damping condition holds on
\(\mathcal A(t)\) over the horizon \(T>0\) if there exists \(m_T>0\), such that
\begin{align}
\mathcal D_T(t,x)\ge m_T, \qquad t\ge0,\ x\in\mathcal A(t).
\label{eq:finite_time_char_damping}
\end{align}
When \(\mathcal A(t)=\Omega\), we say that the global finite-time characteristic damping condition holds.
\end{definition}

Condition \eqref{eq:finite_time_char_damping} requires every relevant characteristic to accumulate a uniformly positive amount of damping over \([t,t+T]\) and guarantees sufficient total damping without requiring the characteristic to remain in a fixed active region. The constant \(m_T\) depends on the velocity field \(u\), the localization function \(\sigma\), the horizon \(T\), and the family of characteristics under consideration. In Subsection \ref{subsec:Active_region} below, we provide a constructive method to check \eqref{eq:finite_time_char_damping}.

\subsection{Forward-Invariant Support Families} \label{subsec:forw_invariant}

A global damping condition imposed on every characteristic in \(\Omega\), i.e., \(\mathcal A(t)=\Omega \) in \eqref{eq:finite_time_char_damping}, may be stronger than necessary when the error is supported only in part of the domain. We, therefore, introduce support-restricted families generated by the characteristic flow. 

Let \(K_0\subset\Omega\) be a measurable set containing the essential support of the initial error \(e_0\). In the unperturbed case \(w=0\), the support transported from the initial error is contained in
\[\mathcal K_0(t):=X(t;0,K_0).
\]
By the flow property,
\[X(t;s,\mathcal K_0(s))=\mathcal K_0(t), \qquad t\ge s\ge0.
\]
In the perturbed case, if the perturbation \(w\) is supported in a measurable family \(W(s)\subset\Omega\), namely,
\[\operatorname{ess\,supp}w(s,\cdot)\subset W(s),\qquad\text{for a.e. }s\ge0,
\]
the relevant support at time \(t\) is then contained in the forward reachability set
\[X(t;0,K_0)\cup \bigcup_{0\le s\le t}X(t;s,W(s)).\] 
Accordingly, it is convenient to work with any measurable family \(\mathcal K(t)\subset\Omega\) satisfying the forward-invariance condition
\begin{align}
X(t;s,\mathcal K(s))\subset\mathcal K(t),\qquad 0\le s\le t,
\label{eq:support_family_forward}
\end{align}
and containing the support of the initial error and the forward images of the  supports \(W(s)\) of the perturbation.
\begin{lemma}
\label{lem:support_propagation}
Let \(\mathcal K(t)\subset\Omega\), \(t\ge0\), be a family of measurable sets satisfying \eqref{eq:support_family_forward}
and let \(e\) be the mild solution of \eqref{eq:error_closed_loop}. Assume that
\begin{align}
\operatorname{ess\,supp}e_0\subset\mathcal K(0), 
\label{eq:support_e0}
\end{align}
and
\begin{align}
\operatorname{ess\,supp}w(t,\cdot)\subset\mathcal K(t),\qquad\text{for a.e. }t\ge0.
\label{eq:support_w}
\end{align}
Then,
\begin{align}
\operatorname{ess\,supp}e(t,\cdot)\subset\mathcal K(t), \qquad\text{for a.e. }t\ge0. \label{eq:support_e}
\end{align}
\end{lemma}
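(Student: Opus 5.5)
We argue directly from the Duhamel representation of the mild solution from Proposition~\ref{prop:wellposedness},
\[
e(t)=U(t,0)e_0+\int_0^t U(t,s)w(s)\,ds ,
\]
where \(U(t,s)\) is the evolution family constructed in its proof. The key observation is that \(U(t,s)\) preserves supports along the flow. By definition, \((U(t,s)\phi)(X(t;s,x))=\phi(x)E(t,s;x)\), and \(E\) is strictly positive and finite a.e. Hence \(U(t,s)\phi\) vanishes a.e. outside the image \(X(t;s,\{\phi\neq0\})\). This image is well defined up to null sets, because \(X(t;s,\cdot)\) is a bi-Lipschitz homeomorphism of \(\Omega\) and so maps null sets to null sets; Liouville's formula gives the same conclusion through the change of variables. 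So the first step is a lemma: if \(\phi=0\) a.e. outside a measurable set \(A\), then \(U(t,s)\phi=0\) a.e. outside \(X(t;s,A)\).

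Next we apply this lemma to both terms. By \eqref{eq:support_e0}, \(e_0=0\) a.e. outside \(\mathcal K(0)\). The lemma then gives that \(U(t,0)e_0\) vanishes a.e. outside \(X(t;0,\mathcal K(0))\subset\mathcal K(t)\), using \eqref{eq:support_family_forward}. Similarly, for a.e. \(s\in[0,t]\), \eqref{eq:support_w} gives that \(U(t,s)w(s)\) vanishes a.e. outside \(X(t;s,\mathcal K(s))\subset\mathcal K(t)\).

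For the integral term, we pair with an arbitrary test function \(\varphi\in L^2(\Omega)\) supported in \(\Omega\setminus\mathcal K(t)\). Fubini's theorem is justified by the Bochner integrability established in Proposition~\ref{prop:wellposedness}, and it gives
\[
\Bigl\langle \int_0^t U(t,s)w(s)\,ds,\varphi\Bigr\rangle=\int_0^t\langle U(t,s)w(s),\varphi\rangle\,ds=0 .
\]
Therefore \(e(t)=0\) a.e. on \(\Omega\setminus\mathcal K(t)\).

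The remaining issue, and the expected main obstacle, is the measure-theoretic one. We have ``\(e(t)=0\) a.e. outside \(\mathcal K(t)\)'', whereas the statement is phrased in terms of the essential support, defined as the complement of the largest open null set. The set \(\mathcal K(t)\) is only assumed measurable, and need not be closed. Consistently with how \eqref{eq:support_e0} and \eqref{eq:support_w} are used as hypotheses, we will interpret \(\operatorname{ess\,supp} f\subset\mathcal K\) as ``\(f=0\) a.e. on \(\Omega\setminus\mathcal K\)''. We will remark that for closed (relative to \(\Omega\)) families the two notions agree. Under this convention the hypotheses yield exactly the vanishing used above, and the conclusion \eqref{eq:support_e} follows.

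Alternatively, to avoid relying on the operator representation, one can approximate \(e_0,w\) by smooth data with the same support property, for instance by truncating with the indicator of \(\mathcal K\), which preserves the support condition. We then use the pointwise characteristic formula \eqref{eq:char_error_formula} and pass to the limit by the continuous dependence in Proposition~\ref{prop:wellposedness}. Vanishing a.e. outside \(\mathcal K(t)\) is a closed condition in \(L^2\), so it survives the limit.
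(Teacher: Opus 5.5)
Your proposal is correct and follows the paper's route. Like the paper, you combine the backward characteristic representation with forward invariance \eqref{eq:support_family_forward} and use the fact that the bi-Lipschitz flow maps null sets to null sets; the difference is that you carry this out directly on the evolution family \(U(t,s)\) and the Duhamel integral for \(L^2\) data, which the paper only sketches after treating smooth data. (Your truncation-based alternative would not produce smooth data, but your main argument does not need it.)

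Your reading of \(\operatorname{ess\,supp} f\subset\mathcal K\) as ``\(f=0\) a.e.\ outside \(\mathcal K\)'' is exactly what the paper's proof delivers (``up to null sets''), and it is in fact necessary when \(\mathcal K(t)\) is not closed. Take \(u\equiv0\), \(\mathcal K(t)\equiv B\Subset\Omega\) an open ball of radius \(r_B\), \(e_0=0\), and \(w(s,\cdot)\) the indicator of the concentric closed ball of radius \(r_B-s\) for \(0<s<r_B\); then the solution is positive on all of \(B\) for every \(t>0\), so its (closed) essential support is \(\overline B\not\subset\mathcal K(t)\).
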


\begin{proof}
For smooth data, the backward characteristic representation as in \eqref{eq:char_error_formula} gives
\begin{align*}
e(t,y) &= E(t,0;X(0;t,y))e_0(X(0;t,y))\\
&\quad+ \int_0^t E(t,s;X(s;t,y))w(s,X(s;t,y)) ds,
\end{align*}
where \(E(t,s;z)\) is defined in \eqref{eq:E_defintion}. The initial contribution can be nonzero at \(y\) only if \(X(0;t,y)\in \operatorname{ess\,supp}e_0,\)
or equivalently \(y\in X(t;0,\operatorname{ess\,supp}e_0) \subset X(t;0,\mathcal K(0))
\subset \mathcal K(t).\)
Similarly, the contribution generated by \(w(s,\cdot)\) at time \(s\) can be nonzero at \(y\) only if \(X(s;t,y)\in \operatorname{ess\,supp}w(s,\cdot),\)
meaning \(y\in X(t;s,\operatorname{ess\,supp}w(s,\cdot)) \subset X(t;s,\mathcal K(s)) \subset\mathcal K(t)\)
for a.e. \(s\in[0,t]\). Hence, \(e(t,\cdot)\) is supported in \(\mathcal K(t)\), up to null sets.

For general data of class \(L^2\), the conclusion follows by the same characteristic formula for the associated evolution family, or alternatively, by approximation. The Lipschitz flow and its Lipschitz inverse map null sets to null sets, so the inclusions are preserved in the essential-support sense.
\end{proof}

\begin{remark}
In the case where the error is confined to \(\mathcal K(t)\), condition \eqref{eq:support_w} implies that the perturbation is injected only along this family of characteristics. If \(w\) acts outside \(\mathcal K(t)\), then we must either impose a global characteristic damping condition or enlarge \(\mathcal K(t)\) to contain the support of the perturbation and its forward image. Notice also that the family \(\mathcal K(t)\) is fixed independently of the feedback gain and is not part of the feedback law.
\end{remark}

\subsection{Active Region and Lyapunov-Type Entrance}\label{subsec:Active_region}

We introduce here the active region where the localized feedback is effective. We introduce a Lyapunov-type function
\[
\Phi\in C^1(\mathbb R_{\ge 0}\times\overline\Omega)
\] 
and let \(a_d\in\mathbb R\) be such that the active region defined by the sublevel set
\begin{align}
V_d(t):=\{x\in\Omega:\Phi(t,x)\le a_d\}
\label{eq:active_region_closed}
\end{align}
is nonempty for all \(t\ge0\).

The transport derivative of \(\Phi\) along \(u\) is denoted by
\begin{align}
D_u\Phi(t,x) := \partial_t\Phi(t,x)+u(t,x)\cdot\nabla\Phi(t,x).
\label{eq:DuPhi}
\end{align}

Note here that for the geometric results, we work with representatives of \(u,\sigma\), such that the stated inequalities hold pointwise on the relevant sets. The same conclusions hold under the corresponding a.e. assumptions after modifying representatives on null sets.
\begin{assumption}
\label{ass:support_lyapunov}
Let \(\mathcal K(t)\subset\Omega\) satisfy \eqref{eq:support_family_forward}. There exist constants
\(\gamma>0\), \(\Phi_{\max}>a_d\), and \(\sigma_{\min}>0\) and there exist representatives of \(u\) and \(\sigma\) such that, for every \(t\ge0\), the following inequalities hold for the relevant points \(x\in\mathcal K(t)\):
\begin{align}
\Phi(t,x)\le\Phi_{\max}, \qquad x\in\mathcal K(t),
\label{eq:Phi_upper_support}
\end{align}
\begin{align}
D_u\Phi(t,x)\le-\gamma, \qquad x\in\mathcal K(t)\setminus V_d(t), 
\label{eq:DuPhi_negative_outside}
\end{align}
and
\begin{align}
\sigma(t,x)\ge\sigma_{\min},\qquad x\in V_d(t)\cap\mathcal K(t), t\ge0.
\label{eq:sigma_positive_active_region}
\end{align}
\end{assumption}
Notice that \eqref{eq:DuPhi_negative_outside} is a Lyapunov-type entrance condition for the characteristic flow, ensuring that outside the active region \(V_d\), \(\Phi\) decreases uniformly along all relevant characteristics, in the same spirit as a Lyapunov decrease condition. This condition is stronger than the characteristic damping condition of Definition \ref{def:characteristic_damp}, which is the intrinsic geometric requirement for stabilization, but may be difficult to verify directly, since it involves the accumulated damping along the full characteristic flow. In contrast, Assumption \ref{ass:support_lyapunov}, as it is shown below, provides a constructive sufficient condition that guarantees that every relevant characteristic enters the active region and remains there, so that a positive amount of damping is accumulated over every sufficiently long time horizon. The nonlinear extension in Section \ref{sec:nonlinear} uses the same idea with a more general nonlinear Lyapunov-type entrance condition. 
\begin{proposition}
\label{prop:finite_time_entrance}
Suppose Assumption \ref{ass:support_lyapunov} holds. Then,  every characteristic starting in \(\mathcal K(t)\) enters \(V_d(s)\cap\mathcal K(s)\) within the uniform time
\[
T_{\mathrm{in}} := \frac{\Phi_{\max}-a_d}{\gamma}.
\]
More precisely, for every \(t\ge0\) and every \(x\in\mathcal K(t)\),
\begin{align}
X(s;t,x)\in V_d(s)\cap\mathcal K(s), \qquad \forall s\ge t+T_{\mathrm{in}}.
\label{eq:finite_time_entrance}
\end{align}
Consequently, for every \(T>T_{\mathrm{in}}\),
\begin{align}
&\int_t^{t+T}\sigma(s,X(s;t,x)) ds \ge
\sigma_{\min}(T-T_{\mathrm{in}}).
\label{eq:entrance_implies_damping}
\end{align}
\end{proposition}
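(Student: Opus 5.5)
We fix \(t\ge0\) and \(x\in\mathcal K(t)\) and study the scalar function \(\varphi(s):=\Phi(s,X(s;t,x))\) for \(s\ge t\). The flow invariance \eqref{eq:support_family_forward} gives \(X(s;t,x)\in\mathcal K(s)\) for all \(s\ge t\), so the pointwise inequalities of Assumption \ref{ass:support_lyapunov} hold along the whole characteristic. Since \(\Phi\in C^1\) and \(s\mapsto X(s;t,x)\) is absolutely continuous (Carath\'eodory), \(\varphi\) is absolutely continuous. By the chain rule,
\[
\varphi'(s)=D_u\Phi(s,X(s;t,x))\quad\text{for a.e. } s.
\]
One technical point needs care here: \(u\) is only defined a.e. We therefore work with the representatives fixed in Assumption \ref{ass:support_lyapunov}, as the paper stipulates. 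We also use that the Carath\'eodory derivative of \(X\) equals \(u(s,X)\) for a.e. \(s\).

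\emph{Step 1 (entrance).} We show that the characteristic reaches \(V_d\) by time \(t+T_{\mathrm{in}}\). Suppose instead that \(X(s;t,x)\notin V_d(s)\) for all \(s\in[t,t+\tau]\). Then \(\varphi'\le-\gamma\) a.e. on this interval, so
\[
\varphi(t+\tau)\le \Phi_{\max}-\gamma\tau .
\]
On the other hand, being outside \(V_d\) means \(\varphi>a_d\). Together these force \(\tau<T_{\mathrm{in}}\). Hence the first entrance time
\[
s^*:=\inf\{s\ge t: \varphi(s)\le a_d\}
\]
satisfies \(s^*\le t+T_{\mathrm{in}}\), and by continuity \(\varphi(s^*)\le a_d\).

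\emph{Step 2 (no exit).} We show \(\varphi(s)\le a_d\) for all \(s\ge s^*\). Suppose \(\varphi(s_1)>a_d\) for some \(s_1>s^*\). Let \(s_0:=\sup\{s\in[s^*,s_1]:\varphi(s)\le a_d\}\). Then \(\varphi(s_0)=a_d\) and \(\varphi>a_d\) on \((s_0,s_1]\). On \((s_0,s_1]\) the point lies in \(\mathcal K(s)\setminus V_d(s)\), so \(\varphi'\le-\gamma\) there. Integrating gives \(\varphi(s_1)\le a_d-\gamma(s_1-s_0)<a_d\), a contradiction. Combining Steps 1 and 2 with \(X(s;t,x)\in\mathcal K(s)\) yields \eqref{eq:finite_time_entrance} for every \(s\ge s^*\), and in particular for every \(s\ge t+T_{\mathrm{in}}\).

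\emph{Step 3 (damping).} For \(T>T_{\mathrm{in}}\), the integrand is nonnegative on \([t,t+T_{\mathrm{in}}]\). On \([t+T_{\mathrm{in}},t+T]\) it is at least \(\sigma_{\min}\) by \eqref{eq:sigma_positive_active_region}. This gives \eqref{eq:entrance_implies_damping}.

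The main obstacle is not Step 1 but making the chain rule and the pointwise use of the hypotheses rigorous along an individual characteristic when \(u\) and \(\sigma\) are only \(L^\infty\)-in-time representatives. We will handle this by invoking the paper's convention of fixed representatives for which the inequalities hold pointwise. The no-exit argument of Step 2 is the step that needs the careful supremum construction.
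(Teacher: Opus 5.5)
Your proposal is correct and follows essentially the same route as the paper's proof. Both arguments use absolute continuity of \(s\mapsto\Phi(s,X(s;t,x))\) with derivative \(D_u\Phi\) a.e., the linear decrease bound to get entrance by \(t+T_{\mathrm{in}}\), the same last-crossing (supremum) argument to rule out exit from the sublevel set, and \(\sigma\ge0\) on \([t,t+T_{\mathrm{in}}]\) together with \(\sigma\ge\sigma_{\min}\) on \([t+T_{\mathrm{in}},t+T]\) for the damping bound, with representatives of \(u\) and \(\sigma\) handled by the same convention the paper adopts.
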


\begin{proof}
Since \(X(\cdot;t,x)\) is absolutely continuous and \(\Phi\) is in \(C^1\), the map \(s\mapsto\Phi(s,X(s;t,x))\) is absolutely continuous and
\[
\frac{d}{ds}\Phi(s,X(s;t,x))=D_u\Phi(s,X(s;t,x))
\]
for a.e. \(s\). Fix \(t\ge0\) and \(x\in\mathcal K(t)\), and set \(y(s):=\Phi(s,X(s;t,x)).\)  Since \(\mathcal K(t)\) is forward invariant, \(X(s;t,x)\in\mathcal K(s)\) for \(s\ge t\). Whenever
\(y(s)>a_d\), the characteristic lies outside \(V_d(s)\), so
\[
y'(s)=D_u\Phi(s,X(s;t,x))\le-\gamma.
\]
Since \(y(t)\le\Phi_{\max}\), the value \(y(s)\) reaches the level \(a_d\) no later than \(t+T_{\mathrm{in}}\). Moreover, once \(y\) has reached the sublevel set, it cannot exit it. Indeed, if \(y(s_0)\le a_d\) and \(y(s_1)>a_d\) for some \(s_1>s_0\), continuity gives a \(\tau\in[s_0,s_1)\) such that \(y(\tau)=a_d\) and \(y(s)>a_d\) on \((\tau,s_1]\). On this interval \(y'\le-\gamma\) a.e., hence, \(y(s_1)\le a_d-\gamma(s_1-\tau)<a_d\), a contradiction.
The damping estimate \eqref{eq:entrance_implies_damping} follows from the assumption \(\sigma\ge\sigma_{\min}\) on \(V_d(s)\cap\mathcal K(s)\) (\(\sigma\) is represented by a bounded Borel function and the lower bound holds pointwise on the relevant region). 
\end{proof}

\begin{remark}
Assumption \ref{ass:support_lyapunov} is sufficient to infer the finite-time entrance property in Proposition \ref{prop:finite_time_entrance}. In addition, it will be used directly in the weighted Lyapunov estimate of Section \ref{sec:stabilization} using the fact that the inequality \eqref{eq:DuPhi_negative_outside} guarantees decay of the weighted energy outside the active region, while the lower bound on \(\sigma\) gives damping inside the active region.
\end{remark}

\begin{remark}
The support restriction is essential under the impermeability boundary condition we assumed. Since the flow maps \(\Omega\) onto itself, an active region \(V_d(t)\Subset\Omega\) cannot contain all characteristics after finite time. Therefore, the entrance condition must be imposed either globally through recurrent visits, or only on the characteristics carrying the initial error \(e_0\) and perturbation \(w\).
\end{remark}

\begin{remark}
In the case of a static velocity field, Assumption \ref{ass:support_lyapunov} is verified by a standard potential construction. Suppose that velocity field \(u(x)\) is written as 
\[
u(x)=-a(x)\nabla\Phi(x),\qquad a(x)\ge a_{\min}>0,
\]
and let \(V_d=\{x\in\Omega:\Phi(x)\le a_d\}\). If, on
\(\mathcal K(t)\setminus V_d\),
\[|\nabla\Phi(x)|\ge g_{\min}>0,\]
then
\[
D_u\Phi(x)=-a(x)|\nabla\Phi(x)|^2 \le -a_{\min}g_{\min}^2.
\]
Thus, \eqref{eq:DuPhi_negative_outside} holds with \(\gamma=a_{\min}g_{\min}^2\). If \(\Phi\) is bounded above on \(\mathcal K(t)\) and \(\sigma\ge\sigma_{\min}>0\) on \(V_d\cap\mathcal K(t)\), then Proposition \ref{prop:finite_time_entrance} gives characteristic damping on the support family.
\end{remark}

\begin{remark}
It is worth mentioning here that by the characteristic damping condition we may also derive an observability estimate for the undamped transport equation
\[
\partial_t z+\nabla\cdot(uz)=0.
\]
Assume for simplicity the global condition \(\mathcal D_T(t,x)\ge m_T\). Then, the characteristic representation and Liouville's formula give
\[ \int_t^{t+T}\int_\Omega \sigma(s,y)z(s,y)^2 dy ds \ge m_Te^{-M^+T}\|z(t)\|_{L^2(\Omega)}^2, \]
where \( M^+:=\|(\nabla\cdot u)_+\|_{L^\infty(\mathbb R_{\ge 0}\times\Omega)}. \) This can be seen by invoking formula \(z(s,X(s;t,x))^2J(s;t,x)= z(t,x)^2e^{-\int_t^s \nabla\cdot u(\tau,X(\tau;t,x))d\tau}\), coming from the characteristic representation, and performing change of variables \(y=X(s;t,x)\).
In that sense, \(\mathcal D_T\ge m_T\) is the transport analogue of a geometric observability condition, see, for instance, \cite{Bardos1992,Lions1988,Zuazua2005}. Localized observations also arise in observer-based inverse problems for waves, see, for example, \cite{KitsosBajodekBaudouin2023}.
The characteristic damping condition is, therefore, an intrinsic geometric requirement since if relevant characteristics accumulate arbitrarily small damping over every fixed time horizon, then initial errors can be concentrated near such characteristics, and the localized damping has arbitrarily small effect. As a result, the derivation of a uniform localized stabilization estimate cannot be expected without positive accumulated damping on the relevant characteristic family.
\end{remark}

\section{Stabilization and ISS Estimates}
\label{sec:stabilization}

In this section, we prove stabilization of the closed-loop error dynamics. We first give a general result based only on the characteristic damping condition. We then present a weighted Lyapunov criterion based on the Lyapunov-type function \(\Phi\). The first result is more general, while the second derives a classical differential Lyapunov inequality and leads directly to ISS estimates.

\subsection{Stabilization from Finite-Time Characteristic Damping}\label{subsec:Stab_finite-time_ch}

Consider the unperturbed (\(w\equiv 0\)) closed-loop error equation
\begin{align}
\partial_t e+\nabla\cdot(ue)=-\kappa\sigma e.
\label{eq:unperturbed_error_stab}
\end{align}

Let \(\mathcal A(t)\subset\Omega\) denote either the whole domain \(\Omega\), in the global case, or a forward-invariant support family \(\mathcal K(t)\) generated as in Lemma \ref{lem:support_propagation}, where \(\mathcal K(t)\) satisfies \eqref{eq:support_family_forward} (or any measurable family containing that reachable support). Define
\begin{align}
M^-_{\mathcal A}:=\operatorname*{ess\,sup}_{t\ge0, x\in\mathcal A(t)} (\nabla\cdot u(t,x))_-. \label{eq:M_A}
\end{align}

\begin{theorem}
\label{thm:char_damping_stab}
Assume that the characteristic damping condition holds on \(\mathcal A(t)\), namely, there exist \(T>0\) and \(m_T>0\) such that \eqref{eq:finite_time_char_damping} holds true. In the support-restricted case \(\mathcal A(t)=\mathcal K(t)\), assume in addition that
\(\operatorname{ess\,supp}e_0\subset\mathcal K(0),\)
similarly to Lemma \ref{lem:support_propagation}. 
If gain condition
\begin{align}
2\kappa m_T>M^-_{\mathcal A}T,
\label{eq:char_gain_condition}
\end{align}
holds for sufficiently large feedback gain \( \kappa>0\), then the closed-loop equation \eqref{eq:unperturbed_error_stab} is exponentially stable in \(L^2(\Omega)\) for all initial errors supported in \(\mathcal A(0)\).  More precisely, for every solution with \(\operatorname{ess\,supp}e_0\subset\mathcal A(0)\), there exist constants
\(C_T>0\) and \(\alpha_T>0\), such that
\begin{align}
\|e(t)\|_{L^2(\Omega)}^2 \le C_Te^{ -\alpha_T t}\|e_0\|_{L^2(\Omega)}^2, \qquad t\ge0.
\label{eq:char_exp_decay}
\end{align}
We may take \(C_T=e^{M^-_{\mathcal A}T}, \alpha_T=\frac{2\kappa m_T}{T}-M^-_{\mathcal A}\).
\end{theorem}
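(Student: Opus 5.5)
We will prove a one-step contraction over each window of length \(T\) and then iterate it. We use the evolution family \(U(t,s)\) from the proof of Proposition \ref{prop:wellposedness}, together with Liouville's formula \eqref{eq:liouville_formula}.

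\textbf{One-window estimate.} Fix \(t\ge0\) and work first with smooth data. By Lemma \ref{lem:support_propagation} with \(w\equiv0\), we have \(\operatorname{ess\,supp}e(t,\cdot)\subset\mathcal A(t)\) (this is trivial in the global case). The characteristic formula \eqref{eq:char_error_formula} gives \(e(t+T,X(t+T;t,x))=E(t+T,t;x)e(t,x)\). We change variables \(y=X(t+T;t,x)\), which has Jacobian \(J(t+T;t,x)=\exp\int_t^{t+T}\nabla\cdot u\,d\tau\). This yields
\[
\|e(t+T)\|_{L^2(\Omega)}^2=\int_{\mathcal A(t)}|e(t,x)|^2\exp\Bigl(-\int_t^{t+T}\nabla\cdot u(\tau,X(\tau;t,x))\,d\tau-2\kappa\,\mathcal D_T(t,x)\Bigr)dx .
\]
For \(x\in\mathcal A(t)\), forward invariance \eqref{eq:support_family_forward} gives \(X(\tau;t,x)\in\mathcal A(\tau)\). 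Hence \(-\nabla\cdot u\le(\nabla\cdot u)_-\le M^-_{\mathcal A}\) along the characteristic, and so \(-\int_t^{t+T}\nabla\cdot u\,d\tau\le M^-_{\mathcal A}T\). By \eqref{eq:finite_time_char_damping}, \(-2\kappa\mathcal D_T\le-2\kappa m_T\). Therefore
\[
\|e(t+T)\|^2\le q\,\|e(t)\|^2,\qquad q:=e^{M^-_{\mathcal A}T-2\kappa m_T}<1
\]
by the gain condition \eqref{eq:char_gain_condition}.

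\textbf{Growth inside a window.} For \(0\le h\le T\), the same computation, with \(\sigma\ge0\) used to drop the damping term, gives \(\|e(t+h)\|^2\le e^{M^-_{\mathcal A}h}\|e(t)\|^2\le e^{M^-_{\mathcal A}T}\|e(t)\|^2\).

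\textbf{Iteration.} Write \(t=nT+h\) with \(h\in[0,T)\). Applying the window estimate \(n\) times and then the growth estimate once gives
\[
\|e(t)\|^2\le e^{M^-_{\mathcal A}T}q^n\|e_0\|^2 .
\]
Since \(nT=t-h\) and \(\log q=-\alpha_T T\) with \(\alpha_T=2\kappa m_T/T-M^-_{\mathcal A}\), we get \(q^n=e^{-\alpha_T(t-h)}\le e^{\alpha_T T}e^{-\alpha_T t}\). This produces an extra factor \(e^{\alpha_T T}\).

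To obtain exactly \(C_T=e^{M^-_{\mathcal A}T}\), we estimate the whole interval \([0,t]\) directly instead of iterating. Along a characteristic from time \(0\), the time interval \([0,t]\) contains \(n\) complete windows, each contributing damping at least \(m_T\). Then \(2\kappa\int_0^t\sigma\ge 2\kappa m_T n\ge 2\kappa m_T(t/T-1)\), while the divergence contributes at most \(M^-_{\mathcal A}t\). The exponent is therefore at most
\[
M^-_{\mathcal A}t-\frac{2\kappa m_T}{T}\,t+2\kappa m_T=-\alpha_T t+2\kappa m_T .
\]
This gives the constant \(e^{2\kappa m_T}\). One can instead bound \(2\kappa m_T(t/T-1)\) by placing the leftover partial window first, which yields the stated form \(e^{M^-_{\mathcal A}T}\) up to this bookkeeping. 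Either way, exponential decay with rate \(\alpha_T\) follows, and the specific constant is a matter of bookkeeping that I would check last.

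\textbf{Passage to \(L^2\) data.} The smooth-data estimates extend to \(e_0\in L^2\) by density and the continuous dependence in Proposition \ref{prop:wellposedness}. Approximants must keep support in \(\mathcal A(0)\); if needed, we enlarge slightly or multiply by the indicator. Alternatively, we apply the change-of-variables identity directly to \(U(t+T,t)\), which is defined for \(L^2\) data via the Lipschitz flow.

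\textbf{Main obstacle.} The delicate step is justifying that the pointwise bounds hold along characteristics. The quantity \(M^-_{\mathcal A}\) is an essential supremum, while the bound is needed along individual trajectories. Since the flow is bi-Lipschitz, it maps null sets of \((t,x)\) to null sets, so for a.e. starting point \(x\) the bound holds for a.e. \(\tau\), which suffices. The same argument handles the measurability of \(\mathcal D_T\) and of the support family.
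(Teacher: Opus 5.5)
Your argument follows the paper's proof: the one-window contraction $\|e(t+T)\|_{L^2(\Omega)}^2\le q_T\|e(t)\|_{L^2(\Omega)}^2$ from the characteristic formula, Liouville's formula and the change of variables $y=X(t+T;t,x)$; the rough bound $e^{M^-_{\mathcal A}h}$ on a partial window; and iteration with $t=nT+h$. Your use of forward invariance to keep $-\nabla\cdot u\le M^-_{\mathcal A}$ along the whole characteristic, and your null-set argument, are more careful than the paper's. Up to the constant this is a complete proof, and the constant you first obtain, $C_T=e^{2\kappa m_T}=e^{(M^-_{\mathcal A}+\alpha_T)T}$, is the correct one.

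The step that fails is your final claim that placing the leftover partial window first yields $C_T=e^{M^-_{\mathcal A}T}$. The position of the partial window is irrelevant: it has length $h$ and carries no guaranteed damping wherever it sits. The best you can write is $e^{M^-_{\mathcal A}h}q_T^{\,n}=e^{2\kappa m_T h/T}e^{-\alpha_T t}$.

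This is not bookkeeping: the constant $e^{M^-_{\mathcal A}T}$ in the statement is false in general. Take $u\equiv0$, $\mathcal A(t)=\Omega$, and $\sigma(t,x)=\frac{m_T}{\delta}\mathbf 1_{[T-\delta,T)}\bigl(t-T\lfloor t/T\rfloor\bigr)$ with $0<\delta<T$. Every window of length $T$ accumulates exactly $m_T$, $M^-_{\mathcal A}=0$, and $\alpha_T=2\kappa m_T/T$. Yet $e(t)=e_0$ for $0\le t<T-\delta$, so $\|e(t)\|_{L^2(\Omega)}^2\le e^{-\alpha_T t}\|e_0\|_{L^2(\Omega)}^2$ fails there. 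Letting $\delta\to0$ shows that, with rate $\alpha_T$, the constant $e^{2\kappa m_T}$ cannot be improved in general.

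The same failure occurs in the paper's own example for large $\kappa$. An error concentrated near $|x|=R_0$ sees no damping up to $t_0=\mu^{-1}\log(R_0/r_\sigma)$ and its energy grows there, whereas $e^{2\mu T-\alpha_T t_0}\to0$ as $\kappa\to\infty$. The paper's proof itself only concludes ``a suitable $C_T$''. So state the result with rate $\alpha_T$ and $C_T=e^{2\kappa m_T}$, and drop the claim that the stated constant is reachable.
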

\begin{remark}
Stabilization is obtained when the accumulated feedback damping dominates the compressive amplification of the transported error as expressed by the gain condition \eqref{eq:char_gain_condition}. The quantity \(m_T\), therein (see Definition \ref{def:characteristic_damp}), measures the amount of localized damping accumulated along a characteristic over the interval \([t,t+T]\) and \(M^-_{\mathcal A}T\) measures the worst possible \(L^2\)-growth caused by compression of the transport field on the same time interval. Thus, stabilization requires not only a sufficiently large gain, but also a geometric condition ensuring \(m_T>0\). In particular, if a relevant characteristic never encounters the region where \(\sigma>0\), then \(m_T=0\), and the finite-time contraction estimate cannot hold for any feedback  gain \( \kappa\).

It is worth noting that if one wishes to shrink the active region \(V_d\) where the feedback acts, this reduces \(m_T\) and requires a larger feedback gain \(\kappa\) in \eqref{eq:char_gain_condition}. In the present setting, where \(\sigma\in L^\infty(\Omega)\), an active set of lower dimension, such as a curve or a hypersurface instead of a set of positive measure, would generally yield zero accumulated damping for characteristics crossing it. Such singular actuation is outside of the scope of the present framework.
\end{remark}
\begin{proof}[Proof of Theorem \ref{thm:char_damping_stab}]
We give the proof when the solution is confined to \(\mathcal{K}(t)\)  (the global case follows by
taking \(\mathcal A(t)=\Omega\)). By Lemma \ref{lem:support_propagation}, the
unperturbed solution remains supported in \(\mathcal A(t)\). For \(x\in\mathcal A(t)\), the characteristic representation and Liouville's formula
give
\begin{align*}
&e(t+T,X(t+T;t,x))^2J(t+T;t,x) \\
&= e(t,x)^2 e^{-\int_t^{t+T}\nabla\cdot u(s,X(s;t,x)) ds
-2\kappa\int_t^{t+T}\sigma(s,X(s;t,x))ds}.
\end{align*}
Using
\(-\int_t^{t+T}\nabla\cdot u(s,X(s;t,x)) ds
\le M^-_{\mathcal A}T\)
and \eqref{eq:finite_time_char_damping}, we obtain
\[
e(t+T,X(t+T;t,x))^2J(t+T;t,x) \le e^{M^-_{\mathcal A}T-2\kappa m_T}e(t,x)^2.
\]
Changing variables \(y=X(t+T;t,x)\) yields
\[
\|e(t+T)\|_{L^2(\Omega)}^2 \le q_T\|e(t)\|_{L^2(\Omega)}^2, \qquad q_T:=e^{M^-_{\mathcal A}T-2\kappa m_T}.
\]
We have \(q_T<1\) by virtue of \eqref{eq:char_gain_condition}. Iterating over intervals of length \(T\)  (writing \(t=nT+r\), \(r\in[0,T)\)), and using the rough bound (this bound follows from the same characteristic formula with \(\sigma\ge0\), using the propagation on \(\mathcal A(t)\))
\[
\|e(t+r)\|_{L^2(\Omega)}^2 \le e^{M^-_{\mathcal A}r}\|e(t)\|_{L^2(\Omega)}^2,
\qquad 0\le r<T,
\]
gives \eqref{eq:char_exp_decay} with \(\alpha_T=-T^{-1}\log q_T\) and a suitable \(C_T\).  
\end{proof}

\begin{remark}
The proof of Theorem \ref{thm:char_damping_stab} invokes finite-time contraction over intervals of length \(T\), followed by iteration. This method is in the spirit of classical links between integral/finite-time decay estimates and uniform exponential stability as in \cite{Datko1972}, although here the contraction is obtained directly from the characteristic formula.
\end{remark}

\begin{corollary}\label{cor:stab_from_entrance}
Assume that the hypotheses of Proposition \ref{prop:finite_time_entrance} hold
on a forward-invariant family \(\mathcal K(t)\). Then, for every \(T>T_{\mathrm{in}}\), the characteristic damping condition holds on \(\mathcal K(t)\) with
\(m_T=\sigma_{\min}(T-T_{\mathrm{in}}).\)
Consequently, if
\begin{align}
2\kappa\sigma_{\min}(T-T_{\mathrm{in}}) > M^-_{\mathcal K}T
\label{eq:entrance_gain_condition}
\end{align}
for some \(T>T_{\mathrm{in}}\), then the unperturbed closed-loop equation is exponentially stable for all initial errors supported in \(\mathcal K(0)\). In addition, such a \(T\) exists whenever
\begin{align}
2\kappa\sigma_{\min}>M^-_{\mathcal K}.
\label{eq:entrance_asymptotic_gain}
\end{align}
\end{corollary}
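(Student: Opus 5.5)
The plan is to chain Proposition \ref{prop:finite_time_entrance} into Theorem \ref{thm:char_damping_stab}, and then settle the existence of a suitable horizon \(T\) by an elementary limit argument.

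\emph{Step 1: the damping condition.} Fix \(T>T_{\mathrm{in}}\). Estimate \eqref{eq:entrance_implies_damping} holds for every \(t\ge0\) and every \(x\in\mathcal K(t)\). By the definition \eqref{eq:char_damping_int} of \(\mathcal D_T\), this says exactly that
\[
\mathcal D_T(t,x)\ge\sigma_{\min}(T-T_{\mathrm{in}})=:m_T>0 .
\]
So Definition \ref{def:characteristic_damp} is satisfied with \(\mathcal A(t)=\mathcal K(t)\). The family \(\mathcal K(t)\) is forward invariant in the sense of \eqref{eq:support_family_forward}, because this is part of Assumption \ref{ass:support_lyapunov}. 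Hence it is an admissible choice of \(\mathcal A\) in Theorem \ref{thm:char_damping_stab}, and \(M^-_{\mathcal A}=M^-_{\mathcal K}\).

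\emph{Step 2: exponential stability.} If \eqref{eq:entrance_gain_condition} holds for some \(T>T_{\mathrm{in}}\), then with \(m_T\) as above it is literally the gain condition \eqref{eq:char_gain_condition}. Theorem \ref{thm:char_damping_stab} then applies to every \(e_0\) with \(\operatorname{ess\,supp}e_0\subset\mathcal K(0)\). It gives \eqref{eq:char_exp_decay} with
\[
C_T=e^{M^-_{\mathcal K}T},\qquad \alpha_T=\frac{2\kappa\sigma_{\min}(T-T_{\mathrm{in}})}{T}-M^-_{\mathcal K}>0 .
\]

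\emph{Step 3: existence of \(T\).} Consider
\[
f(T):=2\kappa\sigma_{\min}\Bigl(1-\frac{T_{\mathrm{in}}}{T}\Bigr)-M^-_{\mathcal K}, \qquad T>T_{\mathrm{in}}.
\]
Condition \eqref{eq:entrance_gain_condition} is equivalent to \(f(T)>0\), after dividing by \(T>0\). The function \(f\) is continuous and increasing, and \(f(T)\to 2\kappa\sigma_{\min}-M^-_{\mathcal K}\) as \(T\to\infty\). Under \eqref{eq:entrance_asymptotic_gain} this limit is positive, so \(f(T)>0\) for all sufficiently large \(T\). An explicit choice is any
\[
T>\frac{2\kappa\sigma_{\min}T_{\mathrm{in}}}{2\kappa\sigma_{\min}-M^-_{\mathcal K}},
\]
and this bound automatically exceeds \(T_{\mathrm{in}}\).

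There is no substantial obstacle here. The only point needing care is the bookkeeping in Step 1: Proposition \ref{prop:finite_time_entrance} is stated for characteristics starting in \(\mathcal K(t)\), which is exactly the quantifier required by Definition \ref{def:characteristic_damp}.
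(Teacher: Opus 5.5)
Your proposal is correct and follows the paper's proof. Proposition~\ref{prop:finite_time_entrance} supplies the damping bound with \(m_T=\sigma_{\min}(T-T_{\mathrm{in}})\), Theorem~\ref{thm:char_damping_stab} then gives the decay, and the existence of \(T\) comes from \(T/(T-T_{\mathrm{in}})\to1\). Your explicit threshold for \(T\) is a harmless refinement; only note that it equals \(T_{\mathrm{in}}\) when \(M^-_{\mathcal K}=0\), so ``exceeds'' should read ``is at least''. Any \(T\) strictly above it still satisfies \(T>T_{\mathrm{in}}\).
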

\begin{proof}
By Proposition \ref{prop:finite_time_entrance}, inequality \eqref{eq:entrance_implies_damping} holds
for \(x\in\mathcal K(t)\). The result follows from Theorem \ref{thm:char_damping_stab}. Finally,
noticing that \(\frac{T}{T-T_{\mathrm{in}}}\to 1 \text{ as }T\to\infty,\) \eqref{eq:entrance_gain_condition} is feasible for some \(T>T_{\mathrm{in}}\) whenever \eqref{eq:entrance_asymptotic_gain} holds.
\end{proof}

\subsection{A Weighted Lyapunov Criterion}

The main theorem of Section \ref{subsec:Stab_finite-time_ch} is the most general stabilization approach used here, since it requires only accumulated damping along the relevant characteristics. We now present a stronger differential Lyapunov criterion, which is useful when the Lyapunov-type entrance function \(\Phi\) itself can serve as an energy weight.

Let \(\mathcal K(t)\subset\Omega\) be a forward-invariant support family and assume again \(\operatorname{ess\,supp}e_0\subset\mathcal K(0).\)
Assume, in addition to \(\Phi\in C^1(\mathbb R_{\ge 0}\times\overline\Omega)\) in the previous section, that \(\Phi\) is uniformly bounded on \(\mathcal K(t)\), namely that there exist constants \(\Phi_{\min}\), \(\Phi_{\max} \in\mathbb R\), such that 
\begin{align} 
\Phi_{\min}\le\Phi(t,x)\le\Phi_{\max}, \qquad x\in\mathcal K(t), t\ge0 \label{eqPhi_bounded}
\end{align}
(this condition is automatic, for instance, when \(\Phi\) is time-independent, since \(\overline\Omega\) is compact).
For \(\ell>0\), define Lyapunov functional
\begin{align}
W_\ell(t):=\frac12\int_\Omega e^{\ell\Phi(t,x)}e(t,x)^2 dx.
\label{eq:weighted_lyap_functional}
\end{align}
Weighted Lyapunov functionals are classical for one-dimensional hyperbolic systems, see, e.g., \cite{BastinCoron2016}. Related weighted energy ideas also appear in stabilization of dispersive PDEs \cite{KitsosCerpa2021}. In the approach here, the weight is instead defined through the Lyapunov-type function \(\Phi\), so that characteristic decrease of \(\Phi\) outside \(V_d\) yields decay of the energy.

For solutions supported in \(\mathcal K(t)\), this gives \[\frac12e^{\ell\Phi_{\min}}\|e(t)\|_{L^2(\Omega)}^2 \le W_\ell(t)\le \frac12e^{\ell\Phi_{\max}}\|e(t)\|_{L^2(\Omega)}^2. \]
On the support of \(e\), this functional is equivalent to the usual \(L^2\)-energy.

Define \(L_{\mathcal K}:=\operatorname*{ess\,sup}_{t\ge0, x\in\mathcal K(t)\cap V_d(t)} (D_u\Phi(t,x))_+.\) Note that \(L_{\mathcal K}<\infty\) holds, for instance, if \(\partial_t\Phi\), \(\nabla\Phi\), and \(u\) are uniformly bounded on the relevant set.

\begin{theorem}
\label{thm:weighted_lyap_stab}
Suppose that all conditions of Assumption \ref{ass:support_lyapunov} are satisfied in addition to uniform boundedness of \(\Phi\) as in \eqref{eqPhi_bounded}. Assume also that \(L_{\mathcal K}<+\infty \). Select \(\ell>0\), such that
\begin{align}
\ell\gamma>M^-_{\mathcal K},
\label{eq:weighted_outside_condition}
\end{align}
recalling that \(M^-_{\mathcal K}\) is as in \eqref{eq:M_A} and \(\gamma\) is as in \eqref{eq:DuPhi_negative_outside}, and then choose feedback gain \(\kappa>0\), such that
\begin{align}
2\kappa\sigma_{\min}> \ell L_{\mathcal K}+M^-_{\mathcal K}.
\label{eq:weighted_inside_condition}
\end{align}
Then, the closed-loop error equation \eqref{eq:unperturbed_error_stab} is exponentially stable in \(L^2(\Omega)\) for all initial errors supported in
\(\mathcal K(0)\). More precisely, for every
\[0<\alpha<\frac12\min\left\{\ell\gamma-M^-_{\mathcal K}, 2\kappa\sigma_{\min}-\ell L_{\mathcal K}-M^-_{\mathcal K}
\right\},\]
there exists \(C_\ell>0\) such that
\begin{align}
\|e(t)\|_{L^2(\Omega)}^2\le C_\ell e^{-2\alpha t}\|e_0\|_{L^2(\Omega)}^2, \qquad t\ge0.
\label{eq:weighted_exp_decay}
\end{align}
\end{theorem}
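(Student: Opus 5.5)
We differentiate the weighted functional \(W_\ell\) from \eqref{eq:weighted_lyap_functional} along smooth solutions, then pass to mild solutions by density, using Proposition \ref{prop:wellposedness}. By Lemma \ref{lem:support_propagation} with \(w\equiv0\), the solution stays supported in \(\mathcal K(t)\). Hence every integral below runs only over \(\mathcal K(t)\), where \eqref{eqPhi_bounded} and Assumption \ref{ass:support_lyapunov} are available.

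For smooth \(e\), write \(\partial_t e=-\nabla\cdot(ue)-\kappa\sigma e\) and compute
\[
\dot W_\ell=\frac{\ell}{2}\int_\Omega \partial_t\Phi\, e^{\ell\Phi}e^2\,dx-\int_\Omega e^{\ell\Phi}e\,\nabla\cdot(ue)\,dx-\kappa\int_\Omega\sigma e^{\ell\Phi}e^2\,dx .
\]
Use the identity \(e\nabla\cdot(ue)=\tfrac12\nabla\cdot(ue^2)+\tfrac12(\nabla\cdot u)e^2\). Integrate by parts; the boundary term vanishes by the impermeability condition \eqref{eq:impermeability}. This gives
\[
\dot W_\ell=\frac12\int_\Omega e^{\ell\Phi}e^2\bigl(\ell D_u\Phi-\nabla\cdot u-2\kappa\sigma\bigr)\,dx .
\]
The point of the weight is that the transport term converts into \(\ell D_u\Phi\), with \(D_u\Phi\) as in \eqref{eq:DuPhi}.

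Next we split the integral into \(\mathcal K(t)\setminus V_d(t)\) and \(\mathcal K(t)\cap V_d(t)\).
\begin{itemize}
\item On \(\mathcal K(t)\setminus V_d(t)\): use \(\sigma\ge0\), \(D_u\Phi\le-\gamma\) from \eqref{eq:DuPhi_negative_outside}, and \(-\nabla\cdot u\le M^-_{\mathcal K}\). The integrand factor is at most \(-(\ell\gamma-M^-_{\mathcal K})\).
\item On \(\mathcal K(t)\cap V_d(t)\): use \(D_u\Phi\le L_{\mathcal K}\), \(\sigma\ge\sigma_{\min}\) from \eqref{eq:sigma_positive_active_region}, and \(-\nabla\cdot u\le M^-_{\mathcal K}\). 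The factor is at most \(-(2\kappa\sigma_{\min}-\ell L_{\mathcal K}-M^-_{\mathcal K})\).
\end{itemize}
Both constants are positive by \eqref{eq:weighted_outside_condition}–\eqref{eq:weighted_inside_condition}. Setting \(\beta:=\min\{\cdot,\cdot\}\), we get \(\dot W_\ell\le-\beta W_\ell\). For any \(2\alpha<\beta\), Grönwall's inequality gives \(W_\ell(t)\le e^{-2\alpha t}W_\ell(0)\).

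The norm equivalence stated after \eqref{eq:weighted_lyap_functional} then yields \eqref{eq:weighted_exp_decay} with \(C_\ell=e^{\ell(\Phi_{\max}-\Phi_{\min})}\). In fact the rate \(\beta\) itself works; the strict inequality only gives slack for the approximation step.

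The main obstacle is justifying the differential identity at the regularity of Assumption \ref{ass:domain_u}. There \(u\) is only Lipschitz, \(\sigma\) is only bounded, and the pointwise inequalities hold only on the sets \(\mathcal K(t)\), which are merely measurable. The plan is to avoid differentiating rough solutions and to work on characteristics instead. Using the representation from the proof of Theorem \ref{thm:char_damping_stab} and the change of variables \(y=X(s;t,x)\), write
\[
W_\ell(s)=\frac12\int_\Omega e^{\ell\Phi(s,X(s;t,x))}e(t,x)^2\exp\Bigl(-\int_t^s(\nabla\cdot u+2\kappa\sigma)\,d\tau\Bigr)dx .
\]
For a.e. \(x\in\mathcal K(t)\), the integrand is absolutely continuous in \(s\), because \(s\mapsto\Phi(s,X(s;t,x))\) is (as in the proof of Proposition \ref{prop:finite_time_entrance}). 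Its logarithmic derivative is exactly \(\ell D_u\Phi-\nabla\cdot u-2\kappa\sigma\) evaluated along the characteristic, and the characteristic remains in \(\mathcal K(s)\) by \eqref{eq:support_family_forward}. So the pointwise bound above gives exponential decay of each integrand at rate \(\beta\), with no integration by parts needed. Integrating in \(x\) then gives \(W_\ell(s)\le e^{-\beta(s-t)}W_\ell(t)\) directly for mild \(L^2\) solutions. I would present this characteristic version as the rigorous proof and keep the energy computation as the heuristic.
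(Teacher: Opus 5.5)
Your proof is correct, and its core is the paper's proof. The paper uses the same weighted energy identity
\(\dot W_\ell=\frac12\int_\Omega e^{\ell\Phi}(\ell D_u\Phi-\nabla\cdot u-2\kappa\sigma)e^2\,dx\), obtained by integrating by parts with \(u\cdot n=0\). It splits over \(\mathcal K(t)\setminus V_d(t)\) and \(\mathcal K(t)\cap V_d(t)\) with the same two constants, then applies Gr\"onwall and norm equivalence. Your explicit \(C_\ell=e^{\ell(\Phi_{\max}-\Phi_{\min})}\) is exactly what that argument yields.

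Where you genuinely differ is the justification at the regularity of Assumption \ref{ass:domain_u}. The paper only asserts that the general case follows from the smooth computation by standard regularization, or by a distributional-in-time reading. You instead rewrite \(W_\ell(s)\) through \(y=X(s;t,x)\) and Liouville's formula, and show that each characteristic integrand decays at rate \(\beta\).

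This Lagrangian version has real advantages:
\begin{itemize}
\item It applies directly to mild \(L^2\) solutions through the same representation already used in Theorem \ref{thm:char_damping_stab}.
\item It needs neither integration by parts nor Gr\"onwall.
\item It avoids a delicate point in the regularization route. Mollifying \(u\) changes the flow, so the measurable family \(\mathcal K(t)\) need no longer be forward invariant, and the pointwise hypotheses of Assumption \ref{ass:support_lyapunov} are not obviously inherited.
\item It shows that the rate \(\beta\) itself is attained.
\end{itemize}
What the paper's Eulerian form buys is the differential inequality \(\dot W_\ell\le-2\alpha W_\ell\). That inequality is reused verbatim, with the extra \(\int e^{\ell\Phi}ew\,dx\) term, in Theorem \ref{thm:weighted_ISS}. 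Your semigroup-type bound would instead have to be combined with Duhamel's formula there, which also works but is a separate step.

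You should add one detail: why the essential-supremum bounds defining \(M^-_{\mathcal K}\) and \(L_{\mathcal K}\) hold for a.e.\ \(s\) along a.e.\ characteristic. The map \((s,x)\mapsto(s,X(s;t,x))\) is bi-Lipschitz in \(x\) for each \(s\), so it pulls back space-time null sets to null sets, and Fubini then gives the claim.
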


\begin{proof}
For smooth solutions, differentiating \eqref{eq:weighted_lyap_functional}, using
\[\partial_t e+u\cdot\nabla e=-(\nabla\cdot u)e-\kappa\sigma e,\]
and integrating by parts with \(u\cdot n=0\), gives
\begin{align*}
\dot W_\ell (t)=
\frac12 \int_\Omega e^{\ell\Phi} \left(
\ell D_u\Phi-\nabla\cdot u-2\kappa\sigma \right)e^2 dx.
\end{align*}
The calculation is first justified for smooth data and coefficients. The general case follows by standard regularization and stability of the evolution family (alternatively, we may interpret the inequality in the sense of distributions in time).

On \(\mathcal K(t)\setminus V_d(t)\), we have
\[ \ell D_u\Phi-\nabla\cdot u-2\kappa\sigma \le -\ell\gamma+M^-_{\mathcal K}. \]
On \(\mathcal K(t)\cap V_d(t)\), we have
\[\ell D_u\Phi-\nabla\cdot u-2\kappa\sigma \le \ell L_{\mathcal K}+M^-_{\mathcal K}-2\kappa\sigma_{\min}. \]
By the choice of \(\alpha\), both right-hand sides are bounded above by \(-2\alpha\). Since the unperturbed solution is supported in \(\mathcal K(t)\),
\[
\dot W_\ell (t)\le -2\alpha W_\ell (t).
\]
Gr\"onwall's inequality yields
\(W_\ell(t)\le e^{-2\alpha t}W_\ell (0).\)
Since \(e^{\ell\Phi}\) is uniformly bounded above and below on
\(\mathcal K(t)\), \(W_\ell\) is equivalent to the usual \(L^2\)-energy on the support of the solution, whence we infer energy estimate \eqref{eq:weighted_exp_decay}.
\end{proof}

\begin{remark} \label{rem:monoton_navig}
If we further assume \(D_u\Phi(t,x)\le 0\) on \(\mathcal{K}(t)\cap V_d(t)\), for all \(t\ge0\), then \(L_{\mathcal{K}}=0\) and condition \eqref{eq:weighted_inside_condition} reduces to \(2\kappa\sigma_{\min}>M_{\mathcal{K}}^-\).
Choosing any \(\ell>M_{\mathcal{K}}^-/\gamma\), the functional \(W_\ell\) then yields exponential stability whenever the feedback gain dominates the compressive part of the velocity field on the relevant support family.
\end{remark}

\begin{remark}
Theorem \ref{thm:weighted_lyap_stab} is a stronger, pointwise criterion than Theorem \ref{thm:char_damping_stab}. The latter only requires accumulated damping over a horizon and it can cover moving or recurrently visited active regions. Theorem \ref{thm:weighted_lyap_stab} requires the function \(\Phi\) to provide instantaneous decay of the weighted energy outside the active region and tts advantage is that it gives a classical differential Lyapunov inequality.
\end{remark}

We finally provide an ISS estimate for 
\begin{align}
\partial_t e+\nabla\cdot(ue)=-\kappa\sigma e+w
\label{eq:perturbed_error_iss}
\end{align}
through the weighted Lyapunov functional introduced above.

\begin{theorem}
\label{thm:weighted_ISS}
Assume the hypotheses of Theorem \ref{thm:weighted_lyap_stab}. In the
support-restricted case, assume in addition that
\[
\operatorname{ess\,supp}w(t,\cdot)\subset\mathcal K(t), \qquad\text{for a.e. }t\ge0.
\]
Let \(\alpha\) be any constant satisfying the restriction in Theorem \ref{thm:weighted_lyap_stab}.
Then, there exist constants \(C_1,C_2>0\), such that every solution satisfies
\begin{align}
\|e(t)\|_{L^2(\Omega)}^2&\le C_1 e^{-\alpha t}\|e_0\|_{L^2(\Omega)}^2\notag\\&+ C_2\int_0^t e^{-\alpha(t-s)} \|w(s)\|_{L^2(\Omega)}^2 ds.\label{eq:weighted_ISS_estimate}
\end{align}
\end{theorem}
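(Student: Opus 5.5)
We will redo the energy computation from the proof of Theorem~\ref{thm:weighted_lyap_stab}, now keeping the source term, and then absorb it with Young's inequality. First, Lemma~\ref{lem:support_propagation} applies because both \(e_0\) and \(w(t,\cdot)\) are supported in the forward-invariant family \(\mathcal K(t)\). Hence the perturbed solution of \eqref{eq:perturbed_error_iss} stays supported in \(\mathcal K(t)\). On that set the pointwise bounds \(\Phi_{\min}\le\Phi\le\Phi_{\max}\) and the two sign estimates from Assumption~\ref{ass:support_lyapunov} hold.

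For smooth data we differentiate \(W_\ell\) and integrate by parts using \(u\cdot n=0\), exactly as before. This gives
\begin{align*}
\dot W_\ell(t)=\frac12\int_\Omega e^{\ell\Phi}\bigl(\ell D_u\Phi-\nabla\cdot u-2\kappa\sigma\bigr)e^2\,dx+\int_\Omega e^{\ell\Phi}e\,w\,dx .
\end{align*}
Fix \(\alpha\) as in Theorem~\ref{thm:weighted_lyap_stab}. Splitting \(\mathcal K(t)\) into \(\mathcal K(t)\setminus V_d(t)\) and \(\mathcal K(t)\cap V_d(t)\) bounds the first integral by \(-2\alpha W_\ell\).

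For the cross term we apply Young's inequality with weight \(\alpha\):
\[
e^{\ell\Phi}ew\le \tfrac{\alpha}{2}e^{\ell\Phi}e^2+\tfrac{1}{2\alpha}e^{\ell\Phi}w^2 .
\]
Since \(w\) is supported in \(\mathcal K(t)\), we have \(\int e^{\ell\Phi}w^2\le e^{\ell\Phi_{\max}}\|w\|^2_{L^2}\). This yields
\[
\dot W_\ell\le-\alpha W_\ell+\tfrac{1}{2\alpha}e^{\ell\Phi_{\max}}\|w(t)\|_{L^2(\Omega)}^2 .
\]
Halving the decay rate is why the statement has \(e^{-\alpha t}\) rather than \(e^{-2\alpha t}\).

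Gr\"onwall's inequality (variation of constants) then gives
\[
W_\ell(t)\le e^{-\alpha t}W_\ell(0)+\tfrac{e^{\ell\Phi_{\max}}}{2\alpha}\int_0^te^{-\alpha(t-s)}\|w(s)\|^2_{L^2}\,ds .
\]
The norm equivalence \(\tfrac12 e^{\ell\Phi_{\min}}\|e\|^2\le W_\ell\le\tfrac12 e^{\ell\Phi_{\max}}\|e\|^2\) on the support then gives \eqref{eq:weighted_ISS_estimate} with
\[
C_1=e^{\ell(\Phi_{\max}-\Phi_{\min})},\qquad C_2=\alpha^{-1}e^{\ell(\Phi_{\max}-\Phi_{\min})}.
\]

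The main obstacle is justifying the differential inequality for mild \(L^2\) solutions and \(L^2_{\mathrm{loc}}\) inputs. We plan to regularize \(e_0\), \(w\) and the coefficients by smooth approximations that preserve the support condition, possibly on slightly enlarged sets, or to work with the integrated form of the inequality. The integral estimate then passes to the limit via the continuous dependence in Proposition~\ref{prop:wellposedness}, since both sides are continuous in \((e_0,w)\) in \(L^2\times L^2(0,T;L^2)\).
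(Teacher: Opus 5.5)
Your proposal is correct and follows the paper's proof essentially step for step: the same energy identity for $W_\ell$ with the extra term $\int_\Omega e^{\ell\Phi}ew\,dx$, Young's inequality absorbing half of the $-2\alpha W_\ell$ decay, Gr\"onwall, and the norm equivalence on $\mathcal K(t)$. You also make the constants $C_1=e^{\ell(\Phi_{\max}-\Phi_{\min})}$ and $C_2=\alpha^{-1}e^{\ell(\Phi_{\max}-\Phi_{\min})}$ explicit, and your justification for mild solutions is at the same level as the paper's. One caution there: enlarging $\mathcal K(t)$ may destroy the pointwise hypotheses of Assumption~\ref{ass:support_lyapunov}, so a cleaner route is to differentiate the weighted energy along characteristics using Liouville's formula.
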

\begin{proof}
Under the hypotheses of Theorem \ref{thm:weighted_lyap_stab}, we get
\[
\dot W_\ell(t)\le-2\alpha W_\ell(t)+\int_\Omega e^{\ell\Phi(t,x)}e(t,x)w(t,x) dx.
\]
By Young's inequality and the equivalence between \(W_\ell\) and \(\|e\|^2_{L^2(\Omega)}\) on \(\mathcal K(t)\), there exists \(C>0\), such that \(\int_\Omega e^{\ell\Phi}e w dx \le \alpha W_\ell(t)+C\|w(t)\|_{L^2(\Omega)}^2.\)
Hence,
\[ \dot W_\ell(t) \le -\alpha W_\ell(t)+C\|w(t)\|_{L^2(\Omega)}^2.
\]
The equivalence between \(W_\ell\) and \(\|e\|^2_{L^2(\Omega)}\), in conjunction with Gr\"onwall's inequality, yields \eqref{eq:weighted_ISS_estimate}.

\end{proof}

\section{Realization of the Localized Feedback Flux \(J_d\)}
\label{sec:flux}

The stability results above were derived after prescribing the divergence of the control flux. In this section, we discuss realizations of a flux \(J_d\) satisfying
\[ \nabla\cdot J_d=F,\qquad F:=\kappa\sigma e-r^\star,\qquad r^\star:=\nabla\cdot(\rho^\star u),\]
which is the relevant quantity in the balance law, since the density dynamics depend on \(J_d\) only through \(\nabla\cdot J_d\). We can see that \(J_d\) is not unique, since adding any divergence-free vector field leaves the closed-loop error equation unchanged. The construction of \(J_d\) is, therefore, a right-inverse problem for the divergence operator. Continuous right inverses for the divergence are classical, see \cite{Bogovskii1979}.

If the construction is performed on the whole domain, we may solve, for each fixed time \(t\),
\begin{align}
\begin{aligned}
-\Delta\eta(t,\cdot)&=F(t,\cdot) \qquad\text{in }\Omega,
\\ \eta(t,\cdot)&=0 \qquad\text{on }\partial\Omega,
\label{eq:global_poisson_flux}\end{aligned}
\end{align}
and set
\[J_d(t,\cdot):=-\nabla\eta(t,\cdot). \]
Then, \(\nabla\cdot J_d=F\) in \(\Omega\), in the sense of distributions, and the closed-loop error equation \eqref{eq:error_closed_loop} is obtained exactly. Although \(F\) is localized, the corresponding flux \(J_d\) need not be compactly supported, because elliptic right inverses are generally nonlocal. Thus, the global problem should be seen as a theoretical right-inverse construction of the prescribed divergence. A more local actuator representation can be obtained by imposing the divergence law on an interior actuator region and prescribing the corresponding interface flux, as described next.

To obtain such a local representation, let \(U_d\Subset\Omega\) be an open Lipschitz region such that
\[\operatorname{ess\,supp}\sigma(t,\cdot)\cup \operatorname{ess\,supp}r^\star(t,\cdot)\Subset U_d. \]
Recall that \(V_d\) is the active damping region used in the stability proof, and we define here \(U_d\) as the region where the feedback flux is constructed. Typically, one takes \(V_d\Subset U_d\Subset\Omega\), where \(V_d\) is the region on which \(\sigma\) is uniformly positive and \(U_d\) contains the support of \(\sigma\) and of \(r^\star\). If, in a particular design, \(\operatorname{ess\,supp}\sigma\cup\operatorname{ess\,supp}r^\star\Subset \operatorname{int}(V_d)\), then one may take \(U_d=\operatorname{int}(V_d)\).

Since \(F\) is supported in \(U_d\), the nontrivial part of the divergence law is imposed in the actuated region, namely,
\begin{align}
\nabla\cdot J_d(t,\cdot)=F(t,\cdot)
\qquad\text{in }U_d.
\label{eq:local_divergence_flux}
\end{align}
To determine one representative of \(J_d\), let us prescribe an interface flux
\[
J_d(t,\cdot)\cdot n_{U_d}=g_d(t,\cdot) \qquad\text{on }\partial U_d,
\label{eq:interface_flux_condition}
\]
where
\(g_d(t,\cdot)\in H^{-1/2}(\partial U_d)\) satisfies the compatibility condition
\begin{align}
\int_{\partial U_d}g_d(t,x) dS(x)= \int_{U_d}F(t,x)\,dx.
\label{eq:interface_compatibility}
\end{align}
The compatibility condition \eqref{eq:interface_compatibility} is necessary by the divergence theorem, and it allows the actuated region to exchange density with the surrounding domain through its internal boundary. For example, one may choose
\[g_d(t,x)=\chi_d(x)\int_{U_d}F(t,y) dy,\qquad
\int_{\partial U_d}\chi_d(x) dS(x)=1, \]
with a nonnegative \(\chi_d\in L^\infty(\partial U_d)\). Since \(F(t,\cdot)\in L^2(U_d)\), the scalar \(\int_{U_d}F(t,y)\,dy\) is finite, and, since \(\partial U_d\) has finite surface measure, this choice gives \(g_d(t,\cdot)\in L^\infty(\partial U_d)
\subset L^2(\partial U_d) \hookrightarrow H^{-1/2}(\partial U_d).\)

A convenient construction is obtained through a Neumann potential problem. We choose \(g_d(t,\cdot)\in H^{-1/2}(\partial U_d)\) satisfying \eqref{eq:interface_compatibility} and solve
\begin{align}
\begin{aligned}
-\Delta\eta(t,\cdot)&=F(t,\cdot) \qquad\text{in }U_d,
\\ -\nabla\eta(t,\cdot)\cdot n_{U_d}&=g_d(t,\cdot) \qquad\text{on }\partial U_d,
\label{eq:local_neumann_potential}
\end{aligned}
\end{align}
with the normalization
\[
\int_{U_d}\eta(t,x)\,dx=0,
\]
and set
\(J_d(t,\cdot):=-\nabla\eta(t,\cdot)\text{ in }U_d.\) If \(F(t,\cdot)\in L^2(U_d)\), \(g_d(t,\cdot)\in H^{-1/2}(\partial U_d)\), and \eqref{eq:interface_compatibility} holds, then the Neumann problem admits a weak solution, unique in \(H^1(U_d)/\mathbb R\) (unique up to an additive constant) and the zero-mean condition fixes a unique representative in \(H^1(U_d)\). Consequently, \(\nabla\cdot J_d=F\) in \(U_d\) and \(J_d\cdot n_{U_d}=g_d\) on \(\partial U_d\) in the weak sense. The boundary datum \(g_d\) represents the interface exchange generated by the localized actuator. Notice here that the special choice \(g_d=0\) is possible only when \(\int_{U_d}F(t,x) dx=0\) (by virtue of the divergence theorem), which is not satisfied in general for \(F=\kappa\sigma e-r^\star\).

Note that the local construction specifies one representative of the actuator flux inside \(U_d\). If we wished to extend \(J_d\) by zero outside \(U_d\), we would introduce an additional distributional interface term on \(\partial U_d\). In the present formulation, the exact closed-loop equation is determined by the global divergence law \(\nabla\cdot J_d=F\) on \(\Omega\), while the local problem describes how a flux producing the prescribed localized divergence may be computed inside the actuated region. As a result, we obtain localization on the feedback divergence \(F\),  and not necessarily on the vector field \(J_d\) itself.

\section{Extension to Density-Dependent Velocity Fields}
\label{sec:nonlinear}

The purpose of this section is to extend the geometric entrance condition of Section \ref{sec:geometry} to velocity fields depending (nonlinearly) on the density. We show that the characteristic damping argument extends to density-dependent perturbations of the velocity field, provided the resulting flows belong to an admissible class satisfying uniform bounds.

Assume that the velocity field is written as
\begin{align}
u[\rho](t,x)=u_0(t,x)+\varepsilon b[\rho](t,x), \label{eq:density_dependent_navigation_general}
\end{align}
where \(u_0\) is a nominal field, \(b[\rho]\) is a possibly nonlocal perturbation and \(\varepsilon\ge0\) measures the size of the perturbation (not necessarily small).  In this section, we take perturbation \(w\equiv 0\) for simplicity. 

Since the velocity field now depends on \(\rho\), the corresponding feedforward residual is nonlinear. We define, similarly to \eqref{eq:rho_star} for the linear case,
\[
r^\star[\rho](t,x):=\nabla\cdot(\rho^\star u[\rho](t,x))
\]
and impose the feedback law through the divergence constraint
\begin{align}
\nabla\cdot J_d=\kappa\sigma e-r^\star[\rho],
\label{eq:nonlinear_divergence_feedback}
\end{align}
in analogy to \eqref{eq:div_law} for the linear case.
Then, with \(e=\rho-\rho^\star\), the nonlinear closed-loop error equation is
\begin{align}
\partial_t e+\nabla\cdot(eu[\rho])=-\kappa\sigma e
\label{eq:nonlinear_error_equation}
\end{align} 
(noting \( u[\rho]=u[e+\rho^\ast]\)).
\begin{assumption}
\label{ass:nonlinear_admissible}
Let \(\mathfrak A\) be a class of admissible densities \(\rho\) such that, for each \(\rho\in\mathfrak A\), the field \(u[\rho]\) given by \eqref{eq:density_dependent_navigation_general} belongs to
\(L^\infty_{\mathrm{loc}}\bigl(\mathbb R_{\ge 0};W^{1,\infty}(\Omega;\mathbb R^m)\bigr),\) with \(\nabla\cdot u[\rho]\in L^\infty(\mathbb R_{\ge 0}\times\Omega),\)
satisfies the boundary condition
\[u[\rho](t,\cdot)\cdot n=0\qquad\text{on }\partial\Omega,\]
and generates a Carath\'eodory flow
\begin{align}
\frac{d}{ds}X_\rho(s;t,x)=u[\rho](s,X_\rho(s;t,x)),\quad X_\rho(t;t,x)=x.
\label{eq:nonlinear_characteristic_flow}
\end{align}
We assume that the corresponding error \(e\) satisfies
\eqref{eq:nonlinear_error_equation} in the characteristic sense associated with \(X_\rho\).  We also assume that there exists a measurable family \(\mathcal K(t)\), forward invariant for all such flows, namely,
\begin{align}X_\rho(t;s,\mathcal K(s))\subset\mathcal K(t), \qquad t\ge s\ge0, \quad \rho\in\mathfrak A. \label{eq:nonl_forw_inv}
\end{align}
\end{assumption}

Similarly to Section \ref{sec:geometry}, we use a function \(\Phi\in C^1(\mathbb R_{\ge 0}\times\overline\Omega)\) to define the active region \(V_d(t):=\{x\in\Omega:\Phi(t,x)\le a_d\},\)
where \(a_d\in\mathbb R\) is chosen so that \(V_d(t)\) is nonempty for all \(t\ge 0\). We write
\[
D_{u[\rho]}\Phi(t,x):=\partial_t\Phi(t,x)+u[\rho](t,x)\cdot\nabla\Phi(t,x)
\]
for the case of the density-dependent field \(u[\rho].\)

We now state the nonlinear analogue of Assumption \ref{ass:support_lyapunov} of Section \ref{sec:geometry}.
\begin{assumption}
\label{ass:nonlinear_entr}
There exist \(\Phi_{\max}>a_d\), \(\sigma_{\min}>0\), and a function \(\beta\in C(\mathbb R_{\ge0};\mathbb R_{\ge0})\), with \(\beta(r)>0\), for every \(r>0\),  satisfying the following integrability condition: 
\begin{align}
T^\ast:=\int_0^{\Phi_{\max}-a_d}\frac{dr}{\beta(r)}<+\infty.
\label{eq:alpha_condition}
\end{align}
Moreover, for every admissible density \(\rho\) in \(\mathfrak A\), we have \(\Phi(t,x)\le\Phi_{\max}, x\in\mathcal K(t), t\ge 0\) and 
\begin{align}
D_{u[\rho]}\Phi(t,x) \le -\beta(\Phi(t,x)-a_d), \quad x\in\mathcal K(t)\setminus V_d(t). \label{eq:nonlinear_lyap}
\end{align}
Finally, the localization function satisfies
\begin{align}
\sigma(t,x)\ge\sigma_{\min}, \qquad x\in V_d(t)\cap\mathcal K(t),
\label{eq:nonlinear_sigma_min}
\end{align}
for a.e. \(t\ge0\).
\end{assumption}
Condition \eqref{eq:nonlinear_lyap} is a nonlinear Lyapunov-type entrance condition. Along a characteristic of the density-dependent field \(u[\rho]\), the quantity
\begin{align}
y(s):=\Phi(s,X_\rho(s;t,x))-a_d \label{eq:y(s)}
\end{align}
satisfies a.e. the comparison inequality \[\dot y(s)\le -\beta(y(s))\] whenever \(y(s)>0\), as a result of the fact that \(\dot y(s)=D_{u[\rho]}\Phi(s,X_\rho(s;t,x))\).

We are now in a position to state a result in the spirit of Proposition \ref{prop:finite_time_entrance} providing a uniform entrance time for all admissible nonlinear flows.
\begin{proposition}
\label{prop:nonlinear_finite_time_entrance}
Suppose Assumption \ref{ass:nonlinear_entr} holds. Then, every admissible characteristic starting in \(\mathcal K(t)\) enters \(V_d(s)\cap\mathcal K(s)\) within the uniform time
\begin{align}
T_{\mathrm{in}}:=T^\ast,
\label{eq:Tin_alpha}
\end{align}
with \(T^\ast\) given by \eqref{eq:alpha_condition}. More precisely, for every density \(\rho\) in \(\mathfrak A\), every \(t\ge0\), and every relevant point \(x\in\mathcal K(t)\),
\begin{align}
X_\rho(s;t,x)\in V_d(s)\cap\mathcal K(s),
\qquad s\ge t+T_{\mathrm{in}}.
\label{eq:nonlinear_finite_time_entrance}
\end{align}
Consequently, for every \(T>T_{\mathrm{in}}\),
\begin{align}
\int_t^{t+T}\sigma(s,X_\rho(s;t,x)) ds \ge \sigma_{\min}(T-T_{\mathrm{in}}).
\label{eq:nonl_damp_beta}
\end{align}
\end{proposition}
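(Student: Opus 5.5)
We follow the proof of Proposition \ref{prop:finite_time_entrance}, with the linear decrease replaced by a scalar comparison argument for \(\dot y\le-\beta(y)\). Fix \(\rho\in\mathfrak A\), \(t\ge0\), and \(x\in\mathcal K(t)\). Let \(y(s)\) be defined by \eqref{eq:y(s)}. Since \(X_\rho(\cdot;t,x)\) is absolutely continuous and \(\Phi\in C^1\), the map \(y\) is absolutely continuous with \(\dot y(s)=D_{u[\rho]}\Phi(s,X_\rho(s;t,x))\) for a.e. \(s\). By the forward invariance \eqref{eq:nonl_forw_inv}, \(X_\rho(s;t,x)\in\mathcal K(s)\) for all \(s\ge t\). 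Hence \eqref{eq:nonlinear_lyap} gives \(\dot y(s)\le-\beta(y(s))\) a.e. on every interval where \(y>0\). We also have \(y(t)\le\Phi_{\max}-a_d\).

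\emph{Step 1 (entrance time).} If \(y(t)\le0\), the characteristic is already in \(V_d(t)\). Otherwise, let \(\tau:=\inf\{s\ge t: y(s)\le0\}\in(t,\infty]\). On \([t,\tau)\) we have \(y>0\). Introduce
\[
G(r):=\int_0^r\frac{dq}{\beta(q)},
\]
which is finite and increasing on \([0,\Phi_{\max}-a_d]\) by \eqref{eq:alpha_condition}, and locally absolutely continuous on \((0,\infty)\) since \(\beta\) is continuous and positive there. By the chain rule for the composition of an absolutely continuous function with a locally Lipschitz \(G\) on compact subsets of \((0,\infty)\),
\[
\frac{d}{ds}G(y(s))=\frac{\dot y(s)}{\beta(y(s))}\le-1 \quad\text{a.e. on }[t,\tau).
\]
Therefore \(0\le G(y(s))\le G(y(t))-(s-t)\le T^\ast-(s-t)\), which forces \(\tau\le t+T^\ast\). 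By continuity, \(y(\tau)=0\). The main technical point is to justify this chain rule near \(y=0\), where \(1/\beta\) may blow up. We handle it by working on \([t,s]\) with \(s<\tau\), where \(y\) stays in a compact subset of \((0,\infty)\), and then letting \(s\uparrow\tau\).

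\emph{Step 2 (no exit).} Suppose \(y(s_0)\le0\) and \(y(s_1)>0\) for some \(s_1>s_0\ge\tau\). Take the last time \(\tau'\in[s_0,s_1)\) with \(y(\tau')=0\). Then \(y>0\) on \((\tau',s_1]\), and there \(\dot y\le-\beta(y)\le0\) a.e. Since \(y\) is absolutely continuous, \(y(s_1)\le y(\tau')=0\), a contradiction. Note that only \(\beta\ge0\) is used here, which avoids any strict-decrease requirement near \(0\). Combining this with forward invariance gives \(X_\rho(s;t,x)\in V_d(s)\cap\mathcal K(s)\) for all \(s\ge t+T^\ast\), which is \eqref{eq:nonlinear_finite_time_entrance}. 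The bound \(T^\ast\) is uniform in \(\rho\), \(t\), and \(x\), because it depends only on \(\beta\), \(\Phi_{\max}\), and \(a_d\).

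\emph{Step 3 (damping).} For \(T>T_{\mathrm{in}}\), we use \(\sigma\ge0\) on \([t,t+T_{\mathrm{in}}]\) and \eqref{eq:nonlinear_sigma_min} on \([t+T_{\mathrm{in}},t+T]\), where the characteristic lies in \(V_d\cap\mathcal K\). This gives \eqref{eq:nonl_damp_beta}. As in Proposition \ref{prop:finite_time_entrance}, we use a bounded Borel representative of \(\sigma\) for which the lower bound holds pointwise on the relevant set; the paper's ``a.e. \(t\)'' qualifier then suffices, since the integral over \(s\) ignores null sets of times.
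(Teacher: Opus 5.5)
Your proof is correct and follows essentially the same route as the paper. Like the paper, you use forward invariance and the comparison inequality \(\dot y\le-\beta(y)\) to get entrance within \(T^\ast\), a continuity argument to show the sublevel set cannot be exited, and the lower bound on \(\sigma\) after \(t+T_{\mathrm{in}}\); the only differences are that you spell out the comparison step the paper merely asserts, via the primitive \(G(r)=\int_0^r dq/\beta(q)\) on compact subintervals where \(y>0\), and that your no-exit argument needs only \(\beta\ge0\) rather than strict negativity.
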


\begin{proof}
For an admissible density \(\rho\), \(t\ge0\), and \(x\in\mathcal K(t)\), consider \(y\) as in \eqref{eq:y(s)}. By \eqref{eq:nonl_forw_inv}, \(X_\rho(s;t,x)\in\mathcal K(s)\) for \(s\ge t\). Whenever \(y(s)>0\), the characteristic lies outside \(V_d(s)\), and, therefore,
\(y'(s) \le -\beta(y(s))\)
for a.e. such \(s\). Since \(y(t)\le \Phi_{\max}-a_d\), the comparison estimate ensures entrance into \(\{y\le0\}\) no later than \(T_{\mathrm{in}}\). After \(T_{\mathrm{in}}\), the characteristic cannot leave \(V_d\). Indeed, if \(y(s_0)\le0\) and \(y(s_1)>0\) for some \(s_1>s_0\), then by continuity there exists \(\tau\in[s_0,s_1)\) such that \(y(\tau)=0\) and \(y(s)>0\) on \((\tau,s_1]\). On this interval \(y'\le-\beta(y)<0\), contradicting \(y(s_1)>0\). Hence, \eqref{eq:nonlinear_finite_time_entrance} holds and the estimate \eqref{eq:nonl_damp_beta} follows from \eqref{eq:nonlinear_sigma_min}.
\end{proof}
Note that the less general condition \(D_u\Phi\le-\gamma\) we used in Section \ref{sec:geometry} is recovered from \eqref{eq:nonlinear_lyap} by taking \(\beta\equiv\gamma\). The nonlinear decay rate coming from \(\beta(\Phi-a_d)\) is useful for nonlinear perturbations, where the inward field may depend on the distance to the active sublevel set. We utilize an additional integrability condition \eqref{eq:alpha_condition} to meet the finite-time entrance requirement for this comparison inequality. For instance, taking \(\beta(r)=cr\) ensures only asymptotic entrance, whereas \(\beta(r)=cr^p\), \(0\le p<1\), ensures finite-time entrance.

To verify Lyapunov-type condition \eqref{eq:nonlinear_lyap}, we may utilize the following simpler criterion.
\begin{lemma}  \label{lemm:verify_nonlinear}
Assume that the nominal field \(u_0(x)\) satisfies
\begin{align}
D_{u_0}\Phi(t,x)\le-\beta_0(\Phi(t,x)-a_d),\quad x\in\mathcal K(t)\setminus V_d(t), \label{eq:D_u_0_ineq}
\end{align}
where \(\beta_0\in C(\mathbb R_{\ge0};\mathbb R_{\ge0})\) and \(\beta_0(r)>0\), for \(r>0\). Suppose that \eqref{eq:alpha_condition} holds and, for every density \(\rho\in\mathfrak A\), the nonlinear perturbation of the velocity field \eqref{eq:density_dependent_navigation_general} satisfies
\begin{align}
&\varepsilon b[\rho](t,x)\cdot\nabla\Phi(t,x) \notag\\\quad &\le \beta_0(\Phi(t,x)-a_d) -\beta(\Phi(t,x)-a_d)
\label{eq:nonl_bound}
\end{align}
on \(\mathcal K(t)\setminus V_d(t)\), where \(\beta\) is of class \(C(\mathbb R_{\ge0};\mathbb R_{\ge0})\), such that \(0<\beta(r)\le\beta_0(r), \)  for  \(r>0\). Then, \(u[\rho]\) satisfies \eqref{eq:nonlinear_lyap} uniformly over all admissible densities and Proposition \ref{prop:nonlinear_finite_time_entrance} applies.
\end{lemma}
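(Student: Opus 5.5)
The argument is a direct pointwise computation: split the transport derivative along \(u[\rho]\) into its nominal and perturbation parts, then bound each using the two hypotheses. I fix an admissible density \(\rho\in\mathfrak A\), a time \(t\ge0\), and a point \(x\in\mathcal K(t)\setminus V_d(t)\). For such points \(\Phi(t,x)-a_d>0\), so the arguments of \(\beta_0\) and \(\beta\) lie in the region where both are positive. By linearity of the transport derivative in the velocity and by \eqref{eq:density_dependent_navigation_general},
\[
D_{u[\rho]}\Phi(t,x)=D_{u_0}\Phi(t,x)+\varepsilon\, b[\rho](t,x)\cdot\nabla\Phi(t,x).
\]

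Next I bound the first term by \eqref{eq:D_u_0_ineq} and the second by \eqref{eq:nonl_bound}. The \(\beta_0\) contributions cancel:
\[
D_{u[\rho]}\Phi(t,x)\le-\beta_0(\Phi(t,x)-a_d)+\beta_0(\Phi(t,x)-a_d)-\beta(\Phi(t,x)-a_d)=-\beta(\Phi(t,x)-a_d).
\]
This is exactly \eqref{eq:nonlinear_lyap}. No constant in the estimate depends on \(\rho\), so it holds uniformly over \(\mathfrak A\).

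To invoke Proposition \ref{prop:nonlinear_finite_time_entrance}, the remaining items of Assumption \ref{ass:nonlinear_entr} must hold. The function \(\beta\) is continuous, nonnegative, and positive on \((0,\infty)\) by hypothesis, and \eqref{eq:alpha_condition} is assumed for this \(\beta\). The bound \(\Phi\le\Phi_{\max}\) on \(\mathcal K(t)\) and the lower bound \eqref{eq:nonlinear_sigma_min} on \(\sigma\) are not consequences of the lemma's hypotheses. I read them as standing assumptions carried over from Assumption \ref{ass:nonlinear_entr}, and I will state this explicitly in the proof.

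The condition \(\beta\le\beta_0\) is only needed for consistency: it guarantees that the right-hand side of \eqref{eq:nonl_bound} is nonnegative, so the allowed perturbation is never forced to be strictly helpful. The only subtle point is pointwise versus a.e. validity, since \(b[\rho]\) is only in \(W^{1,\infty}\). Following the convention stated before Assumption \ref{ass:support_lyapunov}, I will work with representatives for which the inequalities hold pointwise on the relevant set.
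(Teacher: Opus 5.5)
Your proposal is correct and matches the paper's proof. The paper's proof is exactly the decomposition \(D_{u[\rho]}\Phi=D_{u_0}\Phi+\varepsilon b[\rho]\cdot\nabla\Phi\) followed by the two hypothesized bounds, with the \(\beta_0\) terms cancelling. Your remark is also a fair clarification of something the paper leaves implicit: for Proposition~\ref{prop:nonlinear_finite_time_entrance} to apply, the bound \(\Phi\le\Phi_{\max}\) on \(\mathcal K(t)\) and the lower bound on \(\sigma\) must be carried over as standing hypotheses.
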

\begin{proof}
We readily obtain \eqref{eq:nonlinear_lyap} by noticing that \(D_{u[\rho]}\Phi=D_{u_0}\Phi+\varepsilon b[\rho]\cdot\nabla\Phi\), in conjunction with \eqref{eq:D_u_0_ineq} and \eqref{eq:nonl_bound}.
\end{proof}

Note that condition \eqref{eq:nonl_bound} controls only the component of the perturbation in the direction \(\nabla\Phi\). It is automatically satisfied by tangential components to the level sets of \(\Phi\) and inward components satisfying \(b[\rho]\cdot\nabla\Phi\le0\).

We also present a class of nonlocal perturbations, which is useful because the condition \eqref{eq:nonl_bound} can be verified directly from the kernel. Note here that although the map \(\rho\mapsto b[\rho]\) below is assumed linear, the closed-loop transport equation remains quasilinear. 
\begin{corollary}
\label{cor:nonlocal_kernel}
Let
\begin{align} b[\rho](t,x)=\int_\Omega B(x,y)\rho(t,y) dy, \label{eq:nonl_functional}\end{align}
where \(B\in L^\infty(\Omega\times\Omega;\mathbb R^m)\) with \(\nabla_x B\in L^\infty(\Omega\times\Omega;\mathbb R^{m\times m}).\) Assume also that the kernel satisfies the boundary compatibility condition
\[B(x,y)\cdot n(x)=0, \qquad x\in\partial\Omega,\quad y\in\Omega. \] Assume that admissible densities \(\rho \in \mathfrak A\), as in Assumption \ref{ass:nonlinear_admissible}, are nonnegative and satisfy \(\|\rho(t,\cdot)\|_{L^1(\Omega)}\le M\), for \(M>0.\) It follows that \(b[\rho](t,\cdot)\in W^{1,\infty}(\Omega;\mathbb R^m)\), uniformly over \(\rho\in\mathfrak A\), and \(b[\rho]\cdot n=0\) on \(\partial\Omega\). Consequently, if \(u_0\) satisfies the regularity and boundary assumptions of Assumption \ref{ass:domain_u}, then so does \(u[\rho]\).

Assume now that the nominal field satisfies inequality \eqref{eq:D_u_0_ineq}, for some \(\beta_0\in C(\mathbb R_{\ge0};\mathbb R_{\ge0})\) with \(\beta_0(r)>0\) for \(r>0\). Suppose that there exists a nonnegative function \(k\in C([0,\Phi_{\max}-a_d];\mathbb R_{\ge 0})\), such that
\begin{align}
\bigl(B(x,y)\cdot\nabla\Phi(t,x)\bigr)_+ \le k(\Phi(t,x)-a_d),
\label{eq:directional_kernel_bound}
\end{align}
for \(x\in\mathcal K(t)\setminus V_d(t)\) and \(y\in\Omega\). Define
\[ \beta(r):=\beta_0(r)-\varepsilon M k(r). \]
If \(\beta(r)>0\) on \((0,\Phi_{\max}-a_d]\) and \eqref{eq:alpha_condition} is satisfied, then the nonlinear field \(u[\rho]\) satisfies the finite-time entrance condition with \(T_{\mathrm{in}}\) given by \eqref{eq:Tin_alpha}.
\end{corollary}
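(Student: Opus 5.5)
The plan is to prove the two parts of Corollary \ref{cor:nonlocal_kernel} separately: first the regularity and boundary claims for \(b[\rho]\), then the entrance condition, which will follow from Lemma \ref{lemm:verify_nonlinear}.

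\emph{Regularity and boundary behaviour.} For fixed \(t\), nonnegativity gives \(|b[\rho](t,x)|\le \|B\|_{L^\infty}\int_\Omega\rho(t,y)\,dy\le M\|B\|_{L^\infty}\). Differentiating under the integral, justified for a.e. \(x\) by dominated convergence since \(\nabla_xB\in L^\infty\) and \(\rho(t,\cdot)\in L^1\), yields \(|\nabla_x b[\rho](t,x)|\le M\|\nabla_xB\|_{L^\infty}\). Hence \(b[\rho](t,\cdot)\in W^{1,\infty}(\Omega;\mathbb R^m)\) with bounds independent of \(\rho\in\mathfrak A\) and of \(t\). In particular \(\nabla\cdot b[\rho]\in L^\infty(\mathbb R_{\ge0}\times\Omega)\), since the divergence is bounded by \(\sqrt m\,|\nabla_x b|\).

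Since \(B(\cdot,y)\) is Lipschitz in \(x\), it extends continuously to \(\overline\Omega\). Then \(b[\rho](t,x)\cdot n(x)=\int_\Omega B(x,y)\cdot n(x)\,\rho(t,y)\,dy=0\) for \(x\in\partial\Omega\). Adding \(u_0\), which satisfies Assumption \ref{ass:domain_u}, shows that \(u[\rho]=u_0+\varepsilon b[\rho]\) satisfies the same regularity and impermeability conditions.

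\emph{Entrance condition.} Fix \(x\in\mathcal K(t)\setminus V_d(t)\) and set \(r=\Phi(t,x)-a_d\in(0,\Phi_{\max}-a_d]\). Using \(\rho\ge0\) and the directional bound \eqref{eq:directional_kernel_bound},
\[
\varepsilon b[\rho]\cdot\nabla\Phi
=\varepsilon\int_\Omega \bigl(B(x,y)\cdot\nabla\Phi(t,x)\bigr)\rho(t,y)\,dy
\le \varepsilon\int_\Omega \bigl(B\cdot\nabla\Phi\bigr)_+\rho\,dy
\le \varepsilon M k(r).
\]
By the definition of \(\beta\), the right-hand side equals \(\beta_0(r)-\beta(r)\). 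This is exactly \eqref{eq:nonl_bound}.

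The function \(\beta\) is continuous and satisfies \(0<\beta\le\beta_0\) on \((0,\Phi_{\max}-a_d]\). The lemma formally asks for \(\beta\in C(\mathbb R_{\ge0};\mathbb R_{\ge0})\), but only values on \([0,\Phi_{\max}-a_d]\) are ever used. So I would extend \(\beta\) continuously and positively beyond \(\Phi_{\max}-a_d\), for instance by a constant, and replace \(\beta(0)\) by \(\max\{\beta(0),0\}\). Neither modification affects \eqref{eq:nonl_bound} on the relevant set, where \(r>0\) and \(r\le \Phi_{\max}-a_d\), nor the integral \eqref{eq:alpha_condition}.

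Lemma \ref{lemm:verify_nonlinear} then gives \eqref{eq:nonlinear_lyap} uniformly over \(\mathfrak A\), and Proposition \ref{prop:nonlinear_finite_time_entrance} yields entrance within \(T_{\mathrm{in}}=T^\ast\).

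\emph{Main obstacle.} The only delicate point is the pointwise and boundary meaning of the kernel assumptions. These require \(B(\cdot,y)\) to have a boundary trace for a.e. \(y\), and differentiation under the integral to hold a.e. I would handle both by taking a representative of \(B\) that is Lipschitz in \(x\) uniformly in \(y\), which \(\nabla_x B\in L^\infty\) permits on a domain with \(C^2\) boundary.
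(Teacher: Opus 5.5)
Your proof is correct and follows the paper's argument. The bounds $M\|B\|_{L^\infty}$ and $M\|\nabla_x B\|_{L^\infty}$ give uniform $W^{1,\infty}$ regularity, the estimate $\varepsilon b[\rho]\cdot\nabla\Phi\le\varepsilon\int_\Omega(B\cdot\nabla\Phi)_+\rho\,dy\le\varepsilon Mk(\Phi-a_d)$ gives the entrance inequality, and routing through Lemma~\ref{lemm:verify_nonlinear} amounts to the paper's direct addition of this bound to \eqref{eq:D_u_0_ineq}. Your adjustment of $\beta$ at $r=0$ is harmless but unnecessary: continuity of $\beta$ together with $\beta>0$ on $(0,\Phi_{\max}-a_d]$ already forces $\beta(0)\ge0$.
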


\begin{proof}
Since \(B\in L^\infty(\Omega\times\Omega;\mathbb R^m)\) and \(\|\rho(t,\cdot)\|_{L^1(\Omega)}\le M\), we have
\[\|b[\rho](t,\cdot)\|_{L^\infty(\Omega)}\le M\|B\|_{L^\infty(\Omega\times\Omega)}\]
and \[\|\nabla_x b[\rho](t,\cdot)\|_{L^\infty(\Omega)} \le M\|\nabla_x B\|_{L^\infty(\Omega\times\Omega)}, \]
whence, \(b[\rho](t,\cdot)\in W^{1,\infty}(\Omega;\mathbb R^m)\), uniformly over admissible densities. The boundary compatibility gives \(b[\rho]\cdot n=0\) on \(\partial\Omega\), thereby, \(u[\rho]\cdot n=0\) whenever \(u_0\cdot n=0\).

For \(x\in\mathcal K(t)\setminus V_d(t)\), using \(\rho\ge0\) and \(\|\rho(t,\cdot)\|_{L^1(\Omega)}\le M\), we have
\begin{align*}
&\bigl(b[\rho](t,x)\cdot\nabla\Phi(t,x)\bigr)_+
\notag\\&\quad \le \int_\Omega \bigl(B(x,y)\cdot\nabla\Phi(t,x)\bigr)_+\rho(t,y) dy\\ &\quad \le M k(\Phi(t,x)-a_d).
\end{align*}
Therefore,
\[\varepsilon b[\rho](t,x)\cdot\nabla\Phi(t,x)\le \varepsilon M k(\Phi(t,x)-a_d).
\]
Combining this estimate with \eqref{eq:D_u_0_ineq}, gives
\begin{align}
&D_{u[\rho]}\Phi(t,x)\le-\beta_0(\Phi(t,x)-a_d)+\varepsilon M k(\Phi(t,x)-a_d)\notag\\&\quad = -\beta(\Phi(t,x)-a_d).
\end{align}
Hence, Assumption \ref{ass:nonlinear_entr} holds, and Proposition \ref{prop:nonlinear_finite_time_entrance} applies.
\end{proof}

We considered above for simplicity a linear operator \eqref{eq:nonl_functional}, but  nonlinear (nonlocal) operators can be treated similarly, provided they satisfy the admissibility requirements of Assumption \ref{ass:nonlinear_admissible} and the bound \eqref{eq:nonl_bound}.

We are now in a position to state a stabilization result in the spirit of Theorem \ref{thm:char_damping_stab} concerning the linear case. 
\begin{theorem}
\label{thm:nonlinear_stabilization}
Suppose Assumptions \ref{ass:nonlinear_admissible} and
\ref{ass:nonlinear_entr} hold. Assume also that 
\begin{align}
M^-_{\mathcal K,\mathfrak A}:=\sup_{\rho\in\mathfrak A}
\operatorname*{ess\,sup}_{t\ge0,\ x\in\mathcal K(t)}
(\nabla\cdot u[\rho](t,x))_-<+\infty,\label{eq:nonl_compr}
\end{align}
 for the admissibility class. If there exists \(T>T_{\mathrm{in}}\), with \(T_{\mathrm{in}}\) given by
\eqref{eq:Tin_alpha}, such that
\begin{align}
2\kappa\sigma_{\min}(T-T_{\mathrm{in}})> M^-_{\mathcal K,\mathfrak A}T,
\label{eq:nonlinear_gain_condition}
\end{align}
then, every admissible solution of \eqref{eq:nonlinear_error_equation}  with \(\operatorname{ess\,supp}e_0\subset\mathcal K(0)\) satisfies 
\begin{align}
\|e(t)\|_{L^2(\Omega)}^2 \le C_Te^{-\alpha_Tt}\|e_0\|_{L^2(\Omega)}^2, \qquad t\ge0,
\label{eq:nonlinear_exp_decay}
\end{align}
for some \(C_T,\alpha_T>0\), uniformly over \(\rho\in\mathfrak A\).
\end{theorem}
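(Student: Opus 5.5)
The plan is to reduce Theorem \ref{thm:nonlinear_stabilization} to the argument of Theorem \ref{thm:char_damping_stab}, applied to the flow of the frozen field \(u[\rho]\) along one fixed admissible solution.

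Fix \(\rho\in\mathfrak A\) and the corresponding error \(e\). Regard \(v(t,x):=u[\rho](t,x)\) as a given, prescribed velocity field. By Assumption \ref{ass:nonlinear_admissible}, \(v\) has the regularity and impermeability of Assumption \ref{ass:domain_u}. It therefore generates the Lipschitz flow \(X_\rho\), and this flow satisfies Liouville's formula with Jacobian \(J_\rho\). Since \(e\) solves \eqref{eq:nonlinear_error_equation} in the characteristic sense for \(X_\rho\), it is the solution of the \emph{linear} equation \(\partial_t e+\nabla\cdot(ve)=-\kappa\sigma e\). All the linear machinery then applies to this fixed \(v\).

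The support step comes first. Forward invariance \eqref{eq:nonl_forw_inv} together with \(\operatorname{ess\,supp}e_0\subset\mathcal K(0)\) gives \(\operatorname{ess\,supp}e(t,\cdot)\subset\mathcal K(t)\). This follows by the same argument as Lemma \ref{lem:support_propagation} with \(X\) replaced by \(X_\rho\); that argument only used the backward characteristic representation and the fact that the Lipschitz flow preserves null sets.

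Next I would get the damping. Proposition \ref{prop:nonlinear_finite_time_entrance} yields, for every \(t\ge0\) and \(x\in\mathcal K(t)\),
\[
\int_t^{t+T}\sigma(s,X_\rho(s;t,x))\,ds\ge m_T:=\sigma_{\min}(T-T_{\mathrm{in}}),
\]
with constants independent of \(\rho\). Then I would repeat the contraction step of Theorem \ref{thm:char_damping_stab}. The characteristic formula and Liouville's formula give
\[
e(t+T,X_\rho(t+T;t,x))^2J_\rho(t+T;t,x)=e(t,x)^2e^{-\int_t^{t+T}\nabla\cdot v\,ds-2\kappa\int_t^{t+T}\sigma\,ds}.
\]
By forward invariance the characteristic stays in \(\mathcal K(s)\), so \(-\int\nabla\cdot v\le M^-_{\mathcal K,\mathfrak A}T\) by \eqref{eq:nonl_compr}. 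Changing variables \(y=X_\rho(t+T;t,x)\) then gives
\[
\|e(t+T)\|^2\le q_T\|e(t)\|^2,\qquad q_T:=e^{M^-_{\mathcal K,\mathfrak A}T-2\kappa m_T}<1,
\]
where \(q_T<1\) is exactly \eqref{eq:nonlinear_gain_condition}. For the rough bound on \([t,t+r]\), \(r<T\), I would use the same formula with \(\sigma\ge0\), which gives a factor \(e^{M^-_{\mathcal K,\mathfrak A}r}\). Iterating over intervals of length \(T\) gives \eqref{eq:nonlinear_exp_decay} with \(C_T=e^{M^-_{\mathcal K,\mathfrak A}T}\) and \(\alpha_T=-T^{-1}\log q_T\). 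Every constant depends only on \(\sigma_{\min},T,T_{\mathrm{in}},\kappa\) and \(M^-_{\mathcal K,\mathfrak A}\), so the estimate is uniform over \(\mathfrak A\).

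The main obstacle is the justification step that treats the nonlinear problem as linear along its own flow. The feedback between \(\rho\) and \(v\) is harmless only because the estimates use \(v\) solely through \(X_\rho\), and Assumption \ref{ass:nonlinear_admissible} postulates the characteristic representation rather than deriving it. I would therefore state carefully that no existence theory for the quasilinear problem is claimed. For \(L^2\) data, the pointwise characteristic identities are understood a.e., with the flow's Lipschitz inverse preserving null sets as in Proposition \ref{prop:wellposedness}. I would also check that the entrance bounds hold for the pointwise representatives \(\sigma,\nabla\cdot v\) on the sets \(\mathcal K(s)\).
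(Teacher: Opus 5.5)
Your proposal is correct and follows the paper's proof essentially step for step. You freeze $u[\rho]$, combine the characteristic/Liouville identity with \eqref{eq:nonl_compr} and \eqref{eq:nonl_damp_beta} to get $\|e(t+T)\|_{L^2(\Omega)}^2\le q_T\|e(t)\|_{L^2(\Omega)}^2$, and iterate with the rough bound; you also make explicit the support propagation through \eqref{eq:nonl_forw_inv}, which the paper uses only tacitly.

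The one slip is your explicit constant. With $t=nT+r$ the iteration gives $\|e(t)\|_{L^2(\Omega)}^2\le e^{(M^-_{\mathcal K,\mathfrak A}+\alpha_T)r}e^{-\alpha_T t}\|e_0\|_{L^2(\Omega)}^2$, so you need $C_T=e^{M^-_{\mathcal K,\mathfrak A}T}/q_T=e^{2\kappa\sigma_{\min}(T-T_{\mathrm{in}})}$ rather than $e^{M^-_{\mathcal K,\mathfrak A}T}$. Your constant already fails when $\nabla\cdot u[\rho]\equiv0$ and $\sigma$ vanishes along the relevant characteristics on an initial time interval: there $\|e(t)\|$ stays constant while $e^{-\alpha_T t}<1$. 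This is harmless for the theorem, which only asserts the existence of some $C_T$.
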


\begin{proof}
We fix an admissible trajectory \(\rho\in\mathfrak A\), then, along \(X_\rho\), the error equation holds in the characteristic sense and gives
\begin{align*}&\frac{d}{ds}e(s,X_\rho(s;t,x))\notag\\&\quad =-\bigl(\nabla\cdot u[\rho]+\kappa\sigma\bigr)(s,X_\rho(s;t,x))e(s,X_\rho(s;t,x)),
\end{align*}
whence, we get the implicit formula
\begin{align*}
&e(t+T,X_\rho(t+T;t,x)) \\&\quad = e(t,x)  e^{ -\int_t^{t+T}
(\nabla\cdot u[\rho]+\kappa\sigma)(s,X_\rho(s;t,x)) ds},
\end{align*}
satisfying
\begin{align*}
&e(t+T,X_\rho(t+T;t,x))^2J_\rho(t+T;t,x)\\ &\quad =e(t,x)^2
e^{-\int_t^{t+T}\left( \nabla\cdot u[\rho](s,X_\rho(s;t,x))
+2\kappa\sigma(s,X_\rho(s;t,x)) \right) ds},
\end{align*}
with 
\[J_\rho(t+T;t,x):=\exp\left(\int_t^{t+T}\nabla\cdot u[\rho](s,X_\rho(s;t,x)) ds \right).\]
Using \eqref{eq:nonl_compr} and \eqref{eq:nonl_damp_beta}, we get
\[e(t+T,X_\rho(t+T;t,x))^2J_\rho(t+T;t,x)\le q_T e(t,x)^2,\]
where
\(q_T:=\exp\left(M^-_{\mathcal K,\mathfrak A}T-
2\kappa\sigma_{\min}(T-T_{\mathrm{in}})\right).\) By virtue of \eqref{eq:nonlinear_gain_condition}, \(q_T<1\) and changing variables gives
\[\|e(t+T)\|_{L^2(\Omega)}^2\le q_T\|e(t)\|_{L^2(\Omega)}^2.\]
A rough estimate \(\|e(t+r)\|_{L^2(\Omega)}^2\le e^{M^-_{\mathcal K,\mathfrak A}r}\|e(t)\|_{L^2(\Omega)}^2,\) for \(0\le r<T,\) follows from the same characteristic formula with \(\sigma\ge0\) and after iterating over intervals of length \(T\), we obtain \eqref{eq:nonlinear_exp_decay}.
\end{proof}

\begin{remark}
The results of this section are conditional on the existence of an admissible class \(\mathfrak A\) of nonlinear flows. Within such a class, the characteristic flow depends on the evolving density, but the finite-time entrance estimate remains uniform. To establish well-posedness for a specific choice of the operator \(b[\rho]\) we can invoke standard fixed-point arguments under suitable Lipschitz assumptions, but this not needed for the conditional stability statement above and it is omitted here.
\end{remark}

\begin{remark}
If, in addition, we have \(\gamma_\beta:=\inf_{0<r\le \Phi_{\max}-a_d}\beta(r)>0,\) then, the weighted Lyapunov criterion of Theorem \ref{thm:weighted_lyap_stab} also applies with \(\gamma_\beta\) in place of \(\gamma\) and \(M^-_{\mathcal K,\mathfrak A}\) in place of \(M^-_{\mathcal K}\), provided \((D_{u[\rho]}\Phi)_+\) is uniformly bounded on \(\mathcal K(t)\cap V_d(t)\). If this infimum is zero, we may use Theorem \ref{thm:nonlinear_stabilization}.
\end{remark}

\FloatBarrier

\section{Illustrative Example}
\label{sec:example}

We illustrate the stabilization result of Section IV on the unit disk \(\Omega=\{x\in\mathbb R^2:\ |x|<1\}\). We consider an initial density \(\rho_0\) consisting of two bumps away from the target region, and a reference density \(\rho^\star\) supported in a small target region near the origin, as seen in Fig. \ref{fig:rho_star_e0}. The velocity field transports the density inwards, while the feedback damping acts only in a localized region around the target as in Fig. \ref{fig:geometry}. The purpose of this example is to verify, in a simple explicit setting, the support-restricted characteristic damping condition, the gain condition, the ISS estimate, and the localized flux realization.

\subsection{Example Problem Setup}

Let
\(0<r_\Sigma<r_d<r_\sigma<r_U<R_0<R_1<R_2<1\) and define the target and active regions by
\[\Sigma_d:=\{x:\ |x|<r_\Sigma\}, \qquad V_d:=\{x:\ |x| \leq r_d\}. \]
Recall that the target region \(\Sigma_d\) is where the reference profile \(\rho^\star\) is supported, while \(V_d\) is the region where the localization function \(\sigma\) is positive (damping is effective). The larger disk \[U_d:=\{x:\ |x|<r_U\}\] will be used below as the actuator region for the flux realization. The initial error \(e_0\) is supported inside \[K_0:=\{x:\ |x|\le R_0\}.\] For numerical illustration, we take
\( r_\Sigma=0.15,  r_d=0.30, r_\sigma=0.45,  r_U=0.55,\) \(R_0=0.75, R_1=0.85, R_2=0.95, \mu=1\).

The prescribed velocity field \(u\) is chosen as a static inward radial field, so that characteristics starting from the initial support move towards the target region. Let us choose a cut-off function \(q\in C^\infty([0,1];[0,1])\), such that
\(q(r)=1,  \text{for }0 \le r \le R_1,  q(r)=0,  \text{for }R_2\le r \le 1,\) and define the velocity field by
\[u(x):=-\mu q(|x|)x,\qquad \mu>0,
\]
satisfying \(u\in C^\infty(\overline\Omega;\mathbb R^2)\). Since \(q(1)=0\), we obtain boundary condition 
\[u(x)\cdot n(x)=0 \qquad\text{on }\partial\Omega.\]
The geometry is shown in Fig. \ref{fig:geometry}. 
\begin{figure}[t]
\centering
    \includegraphics[width=0.77\linewidth]{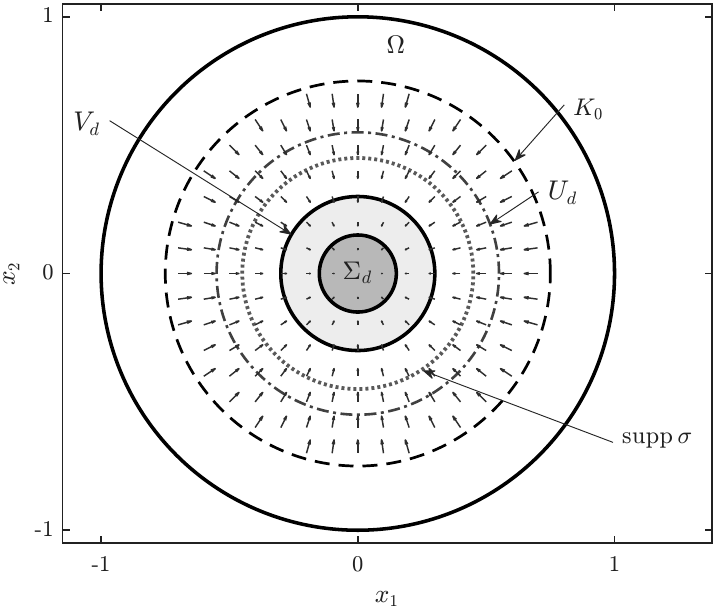}
\caption{Geometry of the example with nested circles \(\Sigma_d \Subset V_d \Subset \operatorname{supp}\sigma \Subset U_d \Subset K_0 \Subset \Omega\). The arrows depict the inward velocity field on the relevant support region.}
\label{fig:geometry}
\end{figure}

To design the reference profile and the initial density, we use smooth compactly supported bump functions with center \(x_i \in \mathbb R^2\) and radius $\varepsilon_i >0$ as 
\begin{align}
\psi_i(x):=
\begin{cases}
\exp\left(-\dfrac{\varepsilon_i^2}{\varepsilon_i^2-|x-x_i|^2}\right),
& |x-x_i|<\varepsilon_i,\\[2ex] 0,
& |x-x_i|\ge \varepsilon_i,
\end{cases} 
\label{eq:bump_functions}
\end{align}
and set, for \(A_1,A_2>0\),
\begin{align}
\rho_{A_1,A_2}(x):= \frac{\bigl(A_1\psi_1(x)+A_2\psi_2(x)\bigr)}
{\displaystyle\int_\Omega \bigl(A_1\psi_1(y)+A_2\psi_2(y)\bigr) dy}, \label{eq:rho_general}
\end{align}

The reference profile is chosen as a smooth density supported in \(\Sigma_d\), namely, \(\rho^\star(x):= \rho_{1,0.85}(x),\)
with \(\rho_{A_1,A_2}(x)\) as above, where bump functions \eqref{eq:bump_functions} are chosen with   \(x_1=(0.055,0.035),  x_2=(-0.050,-0.045), \) \(\varepsilon_1=\varepsilon_2=0.055.\) 
This choice satisfies
\(\rho^\star\in C_c^\infty(\Sigma_d),\) \(\rho^\star\ge0,\)
\( \int_\Omega\rho^\star(x) dx=1.\) The initial density is chosen as a sum of two bumps away from the target region \(\Sigma_d\) of the form \(\rho_0(x)=\rho_{1,0.9}(x),\)
with \(\rho_{A_1,A_2}(x)\) given by \eqref{eq:rho_general}, where bump functions \eqref{eq:bump_functions} are chosen with \(x_1=(0.55,0.16), x_2=(-0.47,-0.24), \varepsilon_1=\varepsilon_2=0.12\).
The positive part of \(e_0(x):=\rho_0(x)-\rho^\star(x)\) is supported in \(K_0\setminus V_d\), while its negative part is supported in \(\Sigma_d\subset V_d\), whence, \(\operatorname{supp}e_0\subset K_0\) as in Theorem \ref{thm:char_damping_stab}.

\begin{figure}[t]
\centering
\includegraphics[width=\linewidth]{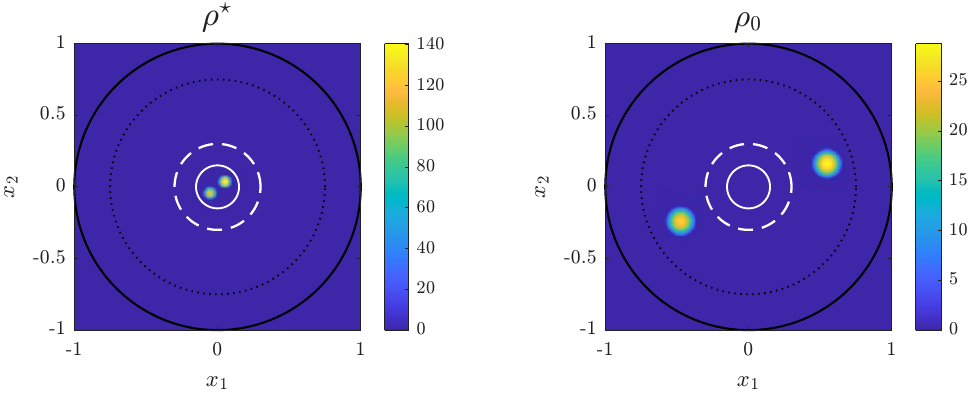}
\caption{Reference profile and initial density (note that the graphs use different color scales). The reference \(\rho^\star\) is supported in \(\Sigma_d\), while the initial density \(\rho_0\) consists of two smooth bumps supported in \(K_0\setminus V_d\). The dashed white circle denotes \(V_d\), the dotted black circle denotes \(K_0\), and the small solid white circle denotes \(\Sigma_d\).}
\label{fig:rho_star_e0}
\end{figure}

\FloatBarrier

\subsection{Controller Calculations}

We next verify the stabilization hypotheses. 

We see that since \(R_0<R_1\), every characteristic starting in \(K_0\) remains in the region where \(q(|x|)=1\), meaning that, on the relevant support family,
\(u(x)=-\mu x,\)
and the characteristic flow is explicitly given by
\(X(s;0,x)=e^{-\mu s}x, x\in K_0.
\)
The corresponding forward support family is
\[
\mathcal K(s):=X(s;0,K_0)=\{x:\ |x|\le e^{-\mu s}R_0\},
\] which is forward invariant.
The localization function \(\sigma\in C^\infty(\Omega;[0,1])\) is chosen radially such that \(\sigma(x)=1, |x|\le r_d, \sigma(x)=0,\quad |x|\ge r_\sigma.\)
Thus, \(\sigma=1\) on \(V_d\), and we may take \(\sigma_{\min}=1.\)
The support of the localization function is contained in the larger disk \(\{|x|\le r_\sigma\}\).

We verify the Lyapunov-type entrance condition using
\[\Phi(x):=|x|^2,\qquad a_d:=r_d^2,\]
so that the active damping region can be written as \(V_d=\{x\in\Omega:\ \Phi(x)\le a_d\}.\)
On the relevant support family, \(u(x)=-\mu x\), and, hence,
\( D_u\Phi(x)=u(x)\cdot\nabla\Phi(x) = (-\mu x)\cdot 2x = -2\mu |x|^2. \)
For \(x\in \mathcal K(t)\setminus V_d\), we have \(|x|\ge r_d\), so
\(D_u\Phi(x)\le -2\mu r_d^2.\)
Thus, Assumption \ref{ass:support_lyapunov} holds on \(\mathcal K(t)\) with
\(\gamma=2\mu r_d^2, \Phi_{\max}=R_0^2,
\sigma_{\min}=1.\)
Consequently, Proposition \ref{prop:finite_time_entrance} applies and gives the entrance time
\(\overline{T_{\rm in}}=\frac{R_0^2-r_d^2}{2\mu r_d^2}.\) Note that, by the explicit characteristic formula, we may use a sharper entrance time \(T_{\rm in}=\frac1\mu\log\frac{R_0}{r_d}.\)
Indeed, for \(x\in K_0\),
\(|X(s;0,x)|=e^{-\mu s}|x|\le e^{-\mu s}R_0,\)
so, \(X(s;0,x)\in V_d\), for all \(s\ge T_{\mathrm{in}}\). We calculate, for the numerical values above, \(T_{\rm in}=\log(2.5) \approx 0.9163.\) Now, since the vector field is assumed time-invariant, the same estimate holds for every initial time \(t\ge0\) and every \(x\in\mathcal K(t)\). Therefore, for every \(T>T_{\mathrm{in}}\),
\(\int_t^{t+T}\sigma(X(s;t,x)) ds \ge T-T_{\mathrm{in}},
\text{ for } x\in\mathcal K(t).\)
Thus, the characteristic damping condition holds on \(\mathcal K(t)\) with
\(m_T=T-T_{\mathrm{in}}.\)
On \(\mathcal K(t)\), \(u(x)=-\mu x\), and, therefore, \(\nabla\cdot u=-2\mu\), implying \(M^-_{\mathcal K}=2\mu.\)
All hypotheses of Corollary \ref{cor:stab_from_entrance} are, therefore, satisfied on the support family \(\mathcal K(t)\) and, thus, the closed-loop error equation is exponentially stable whenever there exists \(T>T_{\mathrm{in}}\), such that \(2\kappa(T-T_{\mathrm{in}})>2\mu T.\) 
It follows that for a sufficiently large horizon \(T\), this condition is feasible whenever \[\kappa>\mu,\] implying that the error satisfies an exponential decay estimate for all initial errors supported in \(K_0\). In the simulations, we use \(\kappa=2.5\), satisfying \(\kappa>\mu=1\).
The feedforward residual associated with the reference profile is
\(r^\star(x)=\nabla\cdot(\rho^\star(x)u(x)).
\)
Since \(\operatorname{supp}\rho^\star\subset \Sigma_d\subset\{|x|<R_1\}\), we have \(u(x)=-\mu x\) on the support of \(\rho^\star\) implying
\(r^\star(x)=\nabla\cdot(-\mu x\rho^\star(x))=-\mu x\cdot \nabla\rho^\star(x)-2\mu\rho^\star(x).\)
The feedback divergence \eqref{eq:div_law} is, therefore,
\(\nabla \cdot J_d=:F(t,x)=\kappa\sigma(x)e(t,x)-r^\star(x).\)
The term \(\kappa\sigma e\) is localized through \(\sigma\), while \(r^\star\) is localized in \(\Sigma_d\). Consequently,
\(\operatorname{ess\,supp}F(t,\cdot) \subset \operatorname{ess\,supp}\sigma\cup \operatorname{supp}r^\star
\subset\{x:\ |x|\le r_\sigma\}
\Subset U_d.\)

\subsection{Simulation Results and Discussion}

We now illustrate the closed-loop behavior. The density snapshots are computed using
\(\rho(t,x)=\rho^\star(x)+e(t,x),\)
where \(e\) solves the closed-loop error equation
\(\partial_t e+\nabla\cdot(ue)=-\kappa\sigma e.\) Using the explicit characteristic representation, we compute the density at the times
\(t=0, 0.5T_{\rm in}, T_{\rm in}, 1.5T_{\rm in}, 2T_{\rm in}, 3T_{\rm in}.\)
The snapshots are shown in Fig. \ref{fig:error_snap}. 

\begin{figure*}[t]
\centering
\includegraphics[width=0.92\textwidth]{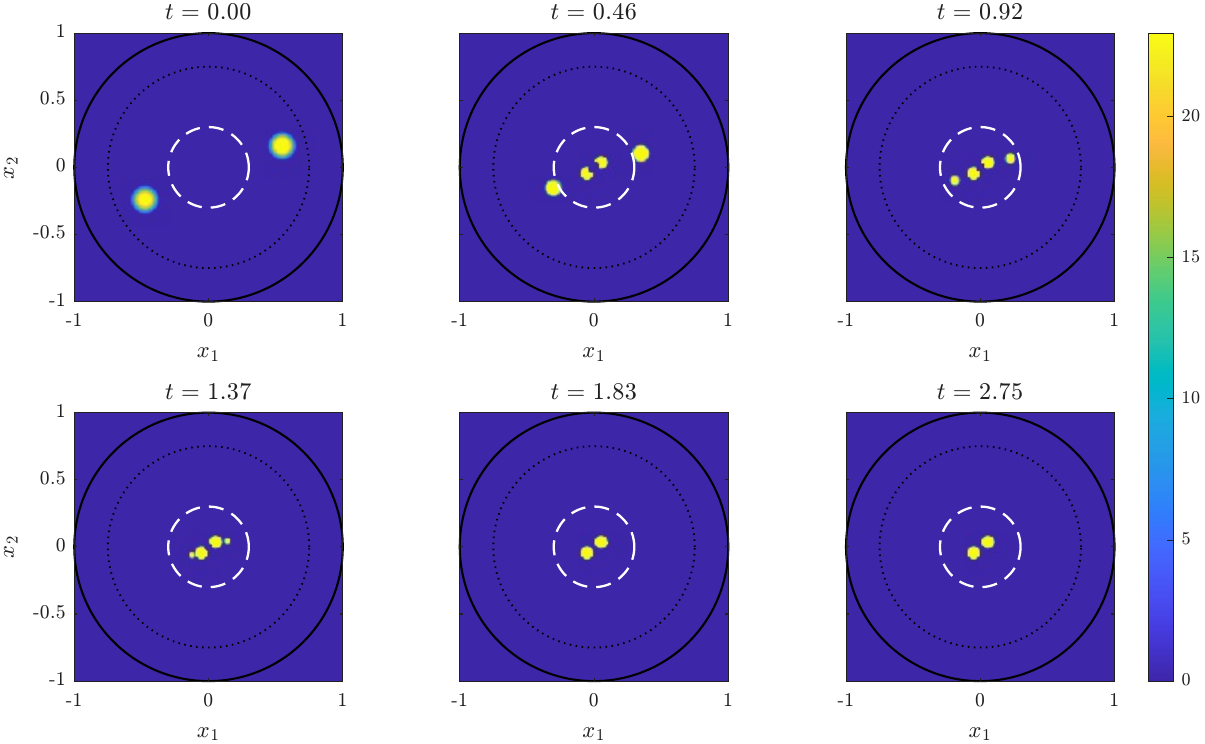}
\caption{Snapshots of the density \(\rho(t,x)\). The density is transported towards \(\rho^\star\) as the transported error enters the active region \(V_d\) at \(t=T_{\mathrm{in}}\) and is damped there. The dashed white circle denotes \(V_d\), and the dotted black circle denotes \(K_0\).}
\label{fig:error_snap}
\end{figure*}

To quantify both the homogeneous decay and the ISS estimate, we consider additive perturbations of the form \(w(t,x)=A_w\sin(\omega_w t)\psi_w(x),\) where \(\psi_w\) is as in \eqref{eq:bump_functions} (supported in a small ball contained in \(K_0\) and close to \(V_d\)), and is normalized in \(L^2(\Omega)\), with \(x_w=(0.38,-0.10)\) and \(\varepsilon_w=0.08\). We use \(\omega_w=8\) and \(A_w\in\{0.5,1,2\}\). Figure \ref{fig:energy_iss} shows the homogeneous energy decay of the error and the input-to-state responses (setting \(e_0=0\)) with amplitudes increasing with \(A_w\), consistently with Theorem \ref{thm:weighted_ISS}. Note that the support of perturbation \(w\) lies in \(K_0\) and for the ISS simulation we chose the fixed support family \(K(t)\equiv K_0\) (\(K_0\) is forward invariant) so that the support hypothesis in Theorem \ref{thm:weighted_ISS} is satisfied.
\begin{figure*}[t]
\centering
\includegraphics[width=0.9\linewidth]{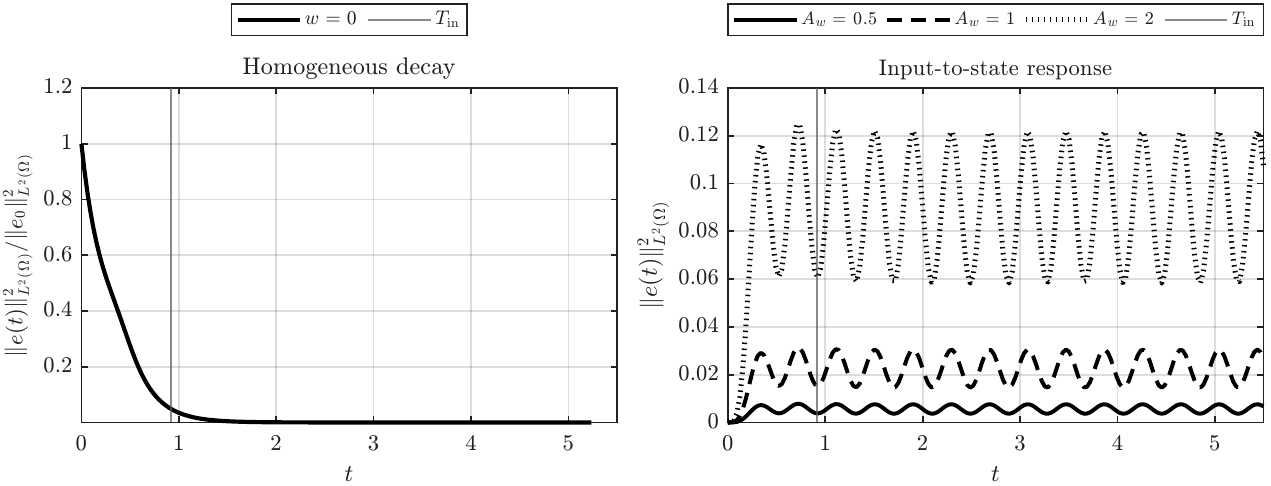}
\caption{Closed-loop error-energy decay and ISS response. Left: homogeneous decay from the initial error \( e_0=\rho_0-\rho^\star\). Right: input-to-state response from zero initial error under bounded additive perturbations of increasing amplitude. The vertical line denotes the entrance time \(T_{\rm in}\).}
\label{fig:energy_iss}
\end{figure*}

All plots here are obtained by evaluating the characteristic representation on a uniform Cartesian grid restricted to \(\Omega\). The inverse characteristic map is used to compute \(e(t,\cdot)\), with numerical integration in time for the accumulated damping along characteristics. The energies are approximated by the corresponding discrete grid sums and for the ISS plot, we use a midpoint rule along backward characteristics.

\subsection{Flux Realization}
We now illustrate how the feedback divergence can be realized by a flux on the actuator region \(U_d\).

Fig. \ref{fig:feedback_divergence} shows the feedback divergence
\(F(t,x)=\kappa\sigma(x)e(t,x)-r^\star(x)\)
at \(t=T_{\rm in}\), when the transported positive component of the initial error has reached the active region \(V_d\), so the localized damping term \(\kappa\sigma e\) is active on the relevant characteristics. We observe that the strongest signed values occur near \(\Sigma_d\), where the feedforward residual \(r^\star\) compensates the transport of the narrow reference profile \(\rho^\star\).

\begin{figure}[t]
\centering
\includegraphics[width=0.78\linewidth]{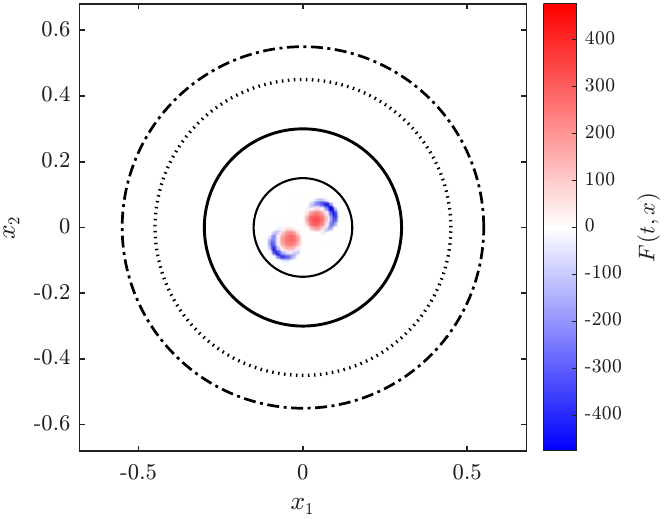}
\caption{Feedback divergence \(F(t,x)=\nabla\cdot J_d(t,x)\) (see \eqref{eq:div_law}) at \(t=T_{\rm in}\). Red and blue denote positive and negative values of \(F\), respectively. The inner solid circle denotes \(\Sigma_d\), the outer solid circle denotes \(V_d\), the dotted circle denotes \(\operatorname{supp}\sigma\), and the dash-dotted circle denotes \(U_d\). The divergence to be constructed by the feedback flux is localized inside \(U_d\).}
\label{fig:feedback_divergence}
\end{figure}

The feedback flux can now be explicitly constructed in \(U_d\) based on the analysis of Section \ref{sec:flux}. Define
\(M_F(t):=\int_{U_d}F(t,y) dy,
\bar F(t):=\frac{1}{|U_d|}M_F(t),
\)
where \(\bar F\) can be written as \(\bar F(t)=\frac{1}{\pi r_U^2}\int_{U_d}F(t,y) dy\) (recalling \(F:=\kappa\sigma e -r^\star\)).
We choose in \eqref{eq:local_neumann_potential} the constant interface flux
\(g_d(t,x)=\frac{M_F(t)}{2\pi r_U}, x\in\partial U_d,\)
which satisfies the compatibility condition
\(\int_{\partial U_d}g_d(t,x)\,dS(x)=M_F(t).\) We first solve the problem
\begin{align*}
-\Delta\eta_0(t,\cdot)
&=F(t,\cdot)-\bar F(t)
\qquad\text{in }U_d, \\
-\nabla\eta_0(t,\cdot)\cdot n_{U_d}&=0 \qquad\text{on }\partial U_d, 
\end{align*}
with the normalization
\(\int_{U_d}\eta_0(t,x) dx=0.\)
Since \(\int_{U_d}\bigl(F(t,x)-\bar F(t)\bigr) dx=0,\)
the elliptic problem satisfies compatibility condition as in Section \ref{sec:flux}. Now, define
\(\eta_b(t,x):=-\frac{M_F(t)}{4\pi r_U^2}|x|^2,\) satisfying \(-\Delta\eta_b(t,x)=\bar{F}(t),\) \( -\nabla\eta_b(t,x)\cdot n_{U_d}= \frac{M_F(t)}{2\pi r_U} \text{on }\partial U_d.\) Therefore, setting \(\eta(t,x):=\eta_0(t,x)+\eta_b(t,x), J_d(t,x):=-\nabla\eta(t,x),\) gives \(\nabla\cdot J_d(t,\cdot)=F(t,\cdot) \text{ in }U_d,\) and \(J_d(t,\cdot)\cdot n_{U_d} = \frac{M_F(t)}{2\pi r_U} \text{ on }\partial U_d.\) We readily obtain \[J_d(t,x) =-\nabla\eta_0(t,x)+ \frac{M_F(t)}{2\pi r_U^2}x, \qquad x\in U_d.\] To represent \(J_d\) more explicitly, let \(G\) denote the Green's function for the Neumann problem for \(-\Delta\) on \(U_d\), namely, \( -\Delta_y G(x,y)=\delta_x(y)-\frac{1}{\vert U_d\vert}\), \(\partial_{n_y}G (x,y)=0\) and zero mean, i.e., \(\int_{U_d}G (x,y) dy=0.\)  Here, \(G\) can be calculated using logarithmic potentials and smooth correction terms. Then, \(\eta_0(t,x)= \int_{U_d}G(x,y) \bigl(F(t,y)-\bar F(t)\bigr) dy,\) and, therefore, inside actuator region \(U_d\), the flux is given in the explicit form
\[J_d(t,x)=-\nabla_x\int_{U_d}G(x,y)\bigl(F(t,y)-\bar F(t)\bigr) dy+ \frac{M_F(t)}{2\pi r_U^2}x.\]
Thus, \(\nabla\cdot J_d=F\) in \(U_d\), and the final radial term accounts for the constant interface flux through \(\partial U_d\).

\subsection{Density-Dependent Velocity Perturbation}

We finally illustrate the nonlinear condition of Section \ref{sec:nonlinear}. Let \(u_0(x):=-\mu q(|x|)x\), as above, reducing to \(u_0(x)=-\mu x\)  on the relevant support family \(K_0\), where \(q\equiv1\). We consider a rotational perturbation of the form
\[u[\rho](t,x)=u_0(x)+\varepsilon b[\rho](t,x), \qquad b[\rho](t,x)=\omega[\rho](t)Rx,\]
\[R=
\begin{pmatrix}
0&-1\\
1&0
\end{pmatrix}, \qquad \omega[\rho](t)=\omega_0\left(1+\frac{\int_\Omega \rho^2(t,y) dy}{1+\int_\Omega \rho^2(t,y) dy}\right),\] 
and take \(\omega_0=3.\) Along any smooth admissible trajectory with \(\rho(t,\cdot)\in L^2(\Omega)\), \(\omega[\rho](t)\) is a bounded nonlinear functional and the velocity field is Lipschitz in space. Moreover, since \(\nabla\Phi(x)=2x\) and \(Rx\cdot x=0\), the perturbation is tangent to the level sets of \(\Phi(x)=|x|^2\), meaning \(b[\rho](t,x)\cdot\nabla\Phi(x)= 2\omega[\rho](t)Rx\cdot x=0.\) It is also tangent to the boundary and the radial part vanishes at \(\partial\Omega\) because \(q(1)=0\).
Therefore, on \(\mathcal K(t)\setminus V_d\), \(D_{u[\rho]}\Phi(x)=D_{u_0}\Phi(x)=-2\mu |x|^2\le-2\mu r_d^2,\)
and Assumption \ref{ass:nonlinear_entr} holds with
\(\beta(r)\equiv 2\mu r_d^2.\)
If \(X_\rho(s;t,x)\) denotes the nonlinear characteristic flow generated by \(u[\rho]\), then for \(x\in K_0\), \(|X_\rho(s;0,x)|=e^{-\mu s}|x|.\) Although the nonlocal functional \(\omega[\rho](t)\) changes the angular motion of the characteristics through the cumulative angle \(\alpha_\varepsilon(t)=\int_0^t\varepsilon\omega[\rho](s) ds\), it does not affect the evolution of their distance from the origin \(|X_\rho(s;0,x)|\). As a result, the entrance time into \(V_d\) is unchanged and this is illustrated in Fig. \ref{fig:nonlinear_rotation}.
\begin{figure*}[!t]
\centering
\includegraphics[width=0.92\textwidth]{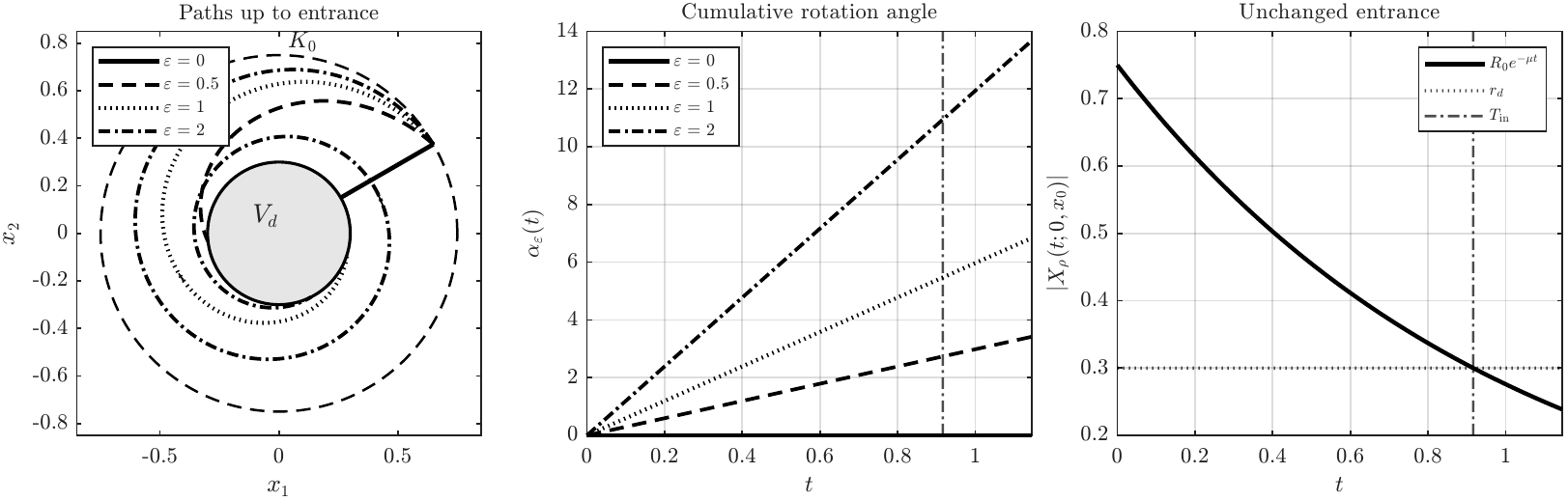}
\caption{Density-dependent rotational perturbation. The first graph shows characteristic paths up to entrance into \(V_d\), the second shows the quantity \(\alpha_\varepsilon(t)=\varepsilon\int_0^t\omega[\rho](s)ds\), through which \(\omega[\rho]\) affects the angular motion, and the third shows that $|X_\rho(t;0,x)|=\exp(-\mu t)|x|$, independently of \(\varepsilon\). We see that the rotational perturbation changes the angular motion but not the entrance time into \(V_d\).} \label{fig:nonlinear_rotation}
\end{figure*}

\begin{remark}
The simple radial geometry above is used only to make the calculations explicit. The framework can also capture anisotropic inward fields, for instance,
\(u(x):=-\mu q(|x|)\bigl(1+\varepsilon_0(x_1^2-x_2^2)\bigr)x; \mu>0, 0\le\varepsilon_0<1,\)
where \(q\in C^\infty([0,1];[0,1])\) is the same type of cutoff function as in the main example, with \(q=1\) on the relevant support region and \(q=0\) near \(\partial\Omega\), with \(u(x)=-\mu\bigl(1+\varepsilon_0(x_1^2-x_2^2)\bigr)x\) on the support family.
With \(\Phi(x)=|x|^2\) again, on the region where \(q=1\), the transport derivative gives \(D_u\Phi(x)=-2\mu |x|^2\bigl(1+\varepsilon_0(x_1^2-x_2^2)\bigr).\)
Since \(|x_1^2-x_2^2|\le |x|^2\le R_0^2<1\) on the support family, it follows that \(D_u\Phi(x)\le -2\mu(1 -\varepsilon_0 R_0^2)a_d, x\in\mathcal K(t)\setminus V_d \) and Assumption \ref{ass:support_lyapunov} holds with
\(\gamma=2\mu(1-\varepsilon_0 R_0^2)a_d.\)
Moreover, on the support family,
\(\nabla\cdot u=-2\mu\bigl(1+2\varepsilon_0(x_1^2-x_2^2)\bigr).\) Consequently,
\(M_{\mathcal K}^-=2\mu(1+2\varepsilon_0 R_0^2),\)
attained along the direction where \(x_1^2-x_2^2\) is maximal on the support family. Corollary \ref{cor:stab_from_entrance} then gives the sufficient asymptotic gain condition \(\kappa>\mu(1+2\varepsilon_0 R_0^2).\) Thus,  this anisotropic field affects the constants entering the developed theory. The entrance estimate depends on the weakest speed through \(1-\varepsilon_0 R_0^2\), while the gain threshold depends on the strongest compressive rate. For the isotropic case \(\varepsilon_0=0\), we recover the threshold \(\kappa>\mu\) of the main example.
\end{remark}

\section{Conclusion}

We developed a localized feedback stabilization framework for continuity equations with interior control fluxes. The main stability structure was finite-time damping accumulated along the relevant characteristics of the velocity field. A support-restricted formulation accounted for compactly supported errors, while a Lyapunov-type entrance condition provided a constructive way to verify the required damping. We also introduced a weighted Lyapunov functional, with a weight defined through a Lyapunov-type function whose sublevel sets determine the active region, and derived ISS estimates for additive perturbations. We presented elliptic right-inverse constructions for producing feedback flux, and finally, we extended the present approach to density-dependent velocity fields. 

Several relevant questions remain open. One direction is the optimal design of the active region \( V_d\) or localization function \(\sigma\), subject to constraints on actuation cost. It would also be interesting to characterize the admissible active regions and Lyapunov-type entrance functions in nonconvex domains or domains with internal obstacles, where the synthesis of velocity fields that guide the characteristics into the active region becomes a geometric control problem. A further extension is the incorporation of positivity, mass, and flux constraints in the realization of the interior control flux, for instance, through optimization-based divergence solvers or coupled continuity equations ensuring mass preservation, as explained in Remark \ref{rem:mass_conservation}.

\section*{References}
\bibliographystyle{IEEEtran}
\bibliography{references}

\begin{IEEEbiography}{}
Constantinos KITSOS received the diploma degree in electrical and computer engineering, the M.Sc. degree in applied mathematics, both from the National Technical University of Athens, Greece, and the Ph.D. in automatic control from Université Grenoble Alpes (GIPSA-lab), Grenoble, France, in 2020. Since then, he has been affiliated as a Postdoctoral Researcher with several research institutes, including the Australian Centre for Robotics, University of Sydney, NSW, Australia, and the Laboratory of Signals and Systems  (L2S), CentraleSupélec, Gif-sur-Yvette, France. 
His research interests include nonlinear observers and control of PDEs. He has also served as an Associate Editor for the European Control Conference 2023 - 2026.
\end{IEEEbiography}

\begin{IEEEbiography}{}
Ian R. MANCHESTER received the B.E. (Hons 1) and Ph.D. degrees in electrical engineering from the University of New South Wales, Sydney, NSW, Australia, in 2002 and 2006, respectively. He has held research positions with Umeå University, Umeå, Sweden, and Massachusetts Institute of Technology, Cambridge, MA, USA. In
2012, he joined the Faculty with the University of Sydney, Sydney, NSW, where he is currently a Professor of mechatronic engineering, Director of the Australian Centre for Robotics, and Director of the Australian Robotic Inspection and Asset Management Hub. His research interests include optimization and learning methods for nonlinear system analysis, identification, and control, and applications in robotics and biomedical engineering. Dr. Manchester is an Associate Editor for IEEE Control Systems Letters and was an Associate Editor for IEEE Robotics and Automation Letters.
\end{IEEEbiography}

\end{document}